\documentclass[12pt]{amsart}

\usepackage{amsfonts}
\usepackage{amssymb}
\usepackage{amsthm}
\usepackage{amsmath}
\usepackage{enumerate}
\usepackage{mathtools}
\usepackage{graphicx}
\usepackage{color}

\newtheorem{theorem}{Theorem}[section]
\newtheorem{lemma}[theorem]{Lemma}
\newtheorem{remark}[theorem]{Remark}
\newtheorem{claim}[theorem]{Claim}
\newtheorem{proposition}[theorem]{Proposition}
\newtheorem{conj}[theorem]{Conjecture}

\newtheorem{corollary}[theorem]{Corollary}
\newtheorem{definition}[theorem]{Definition}
\numberwithin{equation}{section}

\newcommand{\A}{{\mathcal{A}}}

\newcommand{\eps}{\varepsilon}

\begin{document}
\title[Free boundary minimal disks in convex balls]{Free boundary minimal disks in convex balls}
\author{Robert Haslhofer}\address{Department of Mathematics\\ University of Toronto\\Toronto, ON M5S 2E4\\ Canada}
\email{roberth@math.toronto.edu}

\author{Daniel Ketover}\address{Department of Mathematics\\ Rutgers University\\Piscataway, NJ 08854\\ USA}
 \email{dk927@math.rutgers.edu}

\begin{abstract} In this paper, we prove that every strictly convex 3-ball with nonnegative Ricci-curvature contains at least 3 embedded free-boundary minimal 2-disks for any generic metric, and at least 2 solutions even without genericity assumption. Our approach combines ideas from mean curvature flow, min-max theory and degree theory. We also establish the existence of smooth free-boundary mean-convex foliations. In stark contrast to our prior work in the closed setting, the present result is sharp for generic metrics.
\end{abstract}
\date{\today}
\maketitle

\tableofcontents

\section{Introduction}

Let us open by recalling the following classical theorem, which confirms a conjecture from 1905 by Poincare \cite{Poincare_geod_conj}:

\begin{theorem}[{geodesics \cite{Birkhoff,LS,Grayson}}]
$S^2$ equipped with an arbitrary Riemannian metric contains at least 3 embedded closed geodesics.
\end{theorem}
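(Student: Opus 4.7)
The plan is to combine Birkhoff's one-parameter min-max construction with a Lusternik--Schnirelmann style multi-parameter sweepout argument, and then to employ Grayson's curve shortening flow to ensure the resulting critical curves are embedded and geometrically distinct.

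First I would obtain one geodesic via Birkhoff's method. Fix a $1$-parameter sweepout $\{\Sigma_t^0\}_{t\in [0,1]}$ of $S^2$ by simple closed curves with $\Sigma_0^0$ and $\Sigma_1^0$ collapsing to point-curves, chosen so that the sweepout represents a nontrivial element in $\pi_1$ of the space $\Lambda$ of unparametrized closed curves on $S^2$. Setting
\[
W_1 \;=\; \inf_{\{\Sigma_t\}\sim\{\Sigma_t^0\}}\ \sup_{t\in[0,1]} L(\Sigma_t),
\]
an isoperimetric comparison shows $W_1>0$, and the standard min-max existence theorem produces a closed geodesic $\gamma_1$ with $L(\gamma_1)=W_1$.

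Next I would introduce higher widths. Using the natural cohomology classes of the curve space $\Lambda$ (modulo point-curves) in three distinct degrees, one defines $k$-parameter sweepouts for $k=1,2,3$ and obtains widths $W_1\le W_2\le W_3$, each realized as the length of some closed geodesic $\gamma_k$. The Lusternik--Schnirelmann principle asserts that if two widths coincide, $W_k=W_{k+1}$, then the critical set at that level has Lusternik--Schnirelmann category at least $2$, and so contains infinitely many critical curves; in particular, three strictly increasing widths yield three distinct geodesics, and coincidences yield even more.

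The most delicate point is to guarantee that these critical curves are \emph{embedded simple} geodesics rather than multiply-covered or immersed ones. This is where curve shortening flow enters: by Grayson's theorem, any embedded closed curve on $S^2$ either contracts smoothly to a round point under CSF, or converges smoothly to an embedded simple closed geodesic. By running the min-max along CSF trajectories rather than along arbitrary homotopies, one can arrange sweepouts whose slices are always embedded, whose lengths are monotone nonincreasing, and whose subsequential limits at critical parameters are therefore embedded and simple. The \textbf{main obstacle} is precisely the multiplicity problem: a priori, $W_2$ or $W_3$ could be attained by $\gamma_1$ traversed twice or three times, producing no new geodesic. Grayson's flow, functioning as a pseudo-gradient for length whose admissible limits are embedded, is exactly what rules this out, and combined with the Lusternik--Schnirelmann cup-length count it delivers three geometrically distinct embedded closed geodesics.
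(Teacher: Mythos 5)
The paper does not actually prove this statement: it is quoted as a classical theorem and attributed to Birkhoff, Lusternik--Schnirelmann and Grayson, so there is no internal proof to compare against. Your outline faithfully retraces the strategy of those cited works -- Birkhoff's one-parameter width for the first geodesic, a Lusternik--Schnirelmann count based on three sweepout classes (morally the $\mathbb{RP}^3$-type structure of the space of circles modulo point curves, the same heuristic the paper invokes for disks and spheres), and Grayson's curve shortening flow to obtain embeddedness and to serve as the length-decreasing deformation. So the route is the right one, and you correctly identify the multiplicity/iterate problem as the crux.

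As a proof, however, several load-bearing steps are asserted rather than established. First, ``natural cohomology classes of $\Lambda$ in three distinct degrees'' needs to be made precise: the Lusternik--Schnirelmann argument requires nonvanishing cup products (cup-length $3$ in $\mathbb{Z}_2$-cohomology of the space of unparametrized embedded circles relative to point curves) together with an honest deformation theorem for that space; historically this deformation step is exactly where the original Lusternik--Schnirelmann argument was incomplete and had to be repaired later. Second, ``running the min-max along CSF trajectories'' is delicate: different slices of a sweepout shrink to round points at different finite times, so the flowed family is not obviously a continuous admissible sweepout, and one needs continuity of the flow in the initial curve plus a careful treatment of disappearing slices -- this is the substance of Grayson's application, not a formality. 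Third, the multiplicity exclusion is not automatic from embeddedness of slices: a sequence of embedded curves with lengths tending to $W_k$ can still converge in the weak (varifold) sense to an iterated geodesic, so you need a Palais--Smale-type argument showing that almost-maximal slices, pushed by the flow, produce a flow line converging \emph{smoothly} to an embedded geodesic at the level $W_k$, and only then does ``distinct widths give distinct embedded geodesics'' follow. With those three points filled in (which is essentially the content of the cited papers), the argument is complete; as written, they are genuine gaps.
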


The first such geodesic was found by Birkhoff \cite{Birkhoff}, who invented the min-max method. The existence of at least 3 geodesics, leaving open the question of embeddedness, was proved by Lusternik-Schnirelmann  \cite{LS}. Finally, embeddedness was established by Grayson \cite{Grayson}.\\

Moving up one dimension, the two corresponding fundamental conjectures for minimal surfaces, see e.g. \cite{Yau_problems,J,Zhou_ICM}, are:

\begin{conj}[{minimal spheres}]\label{conj1}
$S^3$ equipped with an arbitrary Riemannian metric contains at least 4 embedded minimal 2-spheres.
\end{conj}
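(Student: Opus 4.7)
The plan is to adapt the strategy of the present paper—min-max theory combined with mean curvature flow and degree theory—from the free-boundary setting to the closed case on $S^3$. Concretely, I would work with Simon-Smith sweepouts by embedded $2$-spheres and use Almgren's identification of the topology of the space of $2$-cycles in $S^3$, together with the Lusternik-Schnirelmann/Marques-Neves framework of multiparameter sweepouts, to extract a sequence of candidate min-max widths $W_1 \le W_2 \le W_3 \le W_4$. By the Simon-Smith/Ketover regularity theory, each $W_k$ is realized by a smooth embedded minimal surface, and a genus bound coupled with a catenoid-neck pinching analysis—in the spirit of this paper—forces the realizer to be topologically a $2$-sphere.

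The key steps, in order, are the following. First, establish the genus bound ensuring that every min-max limit is a $2$-sphere rather than a surface of higher genus produced by neck formation. Second, run the Lusternik-Schnirelmann deformation argument, so that each coincidence $W_k = W_{k+1}$ forces the critical set at that width to contain a positive-dimensional continuum of embedded minimal $2$-spheres. Third, for a generic metric, invoke White's structure theorem together with Morse nondegeneracy to exclude such continua, thereby producing four geometrically distinct embedded minimal $2$-spheres. Fourth, pass from generic to arbitrary metrics via a compactness and perturbation argument, using mean-convex MCF foliations as barriers to prevent a fourth sphere from being lost in the limit.

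The main obstacle is the multiplicity problem: in the closed setting there is no mean-convex boundary to force each width to be realized with multiplicity one, so one cannot a priori rule out that several widths are accounted for by the same smaller minimal $2$-sphere counted with high multiplicity. Any proof must also handle the extremal round metric, on which the equatorial $2$-spheres form a positive-dimensional family—this is precisely what makes the conjectured number $4$ sharp, and it is precisely where the passage from generic to arbitrary metrics becomes delicate. A plausible resolution combines the Morse-theoretic genericity results in the spirit of Marques-Neves-Song with an MCF-based deformation that detects geometric distinctness, together with a careful catenoid-neck analysis at coincidence widths; the latter regime is precisely where the techniques developed in the present paper become essential.
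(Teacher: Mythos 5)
The statement you are trying to prove is not a theorem of this paper at all: it is stated as Conjecture~\ref{conj1}, a well-known open problem, and the paper neither proves it nor claims to. What the paper actually proves is the disk analogue (Conjecture~\ref{conj2}) for generic metrics, and for spheres the introduction records the current state of the art: the authors' earlier work gives $2$ of the conjectured $4$ spheres for generic metrics, and only the recent Wang--Zhou resolution of the multiplicity-one problem for unstable Simon--Smith min-max surfaces (combined with techniques of Haslhofer--Ketover and Song) yields $4$ embedded minimal $2$-spheres, and even then only for \emph{generic} metrics. So a correct proof of the statement as you phrase it, for an \emph{arbitrary} metric, would settle an open conjecture; your proposal does not do this.

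Concretely, the gaps are exactly at the points you defer. First, your second and third steps silently assume that each width $W_k$ is achieved with multiplicity one by an embedded sphere distinct from the realizers of the other widths; the genus bound and catenoid-neck analysis control the topology of the support but say nothing about multiplicity, and ruling out, say, $W_3$ or $W_4$ being twice the area of a smaller sphere is precisely the multiplicity-one problem, whose solution (Wang--Zhou) is a major theorem you cannot simply fold into a Lusternik--Schnirelmann deformation. Second, your fourth step --- passing from generic to arbitrary metrics ``via a compactness and perturbation argument'' --- is where the conjecture genuinely remains open: along a sequence of bumpy metrics converging to a given metric, the four spheres can converge with multiplicity or to a common limit, their Morse indices and nondegeneracy are not preserved, and nothing in Fraser--Li-type or White-type compactness prevents the count from dropping in the limit; the round metric, where the solutions form an $\mathbb{RP}^3$-family of totally geodesic equators, shows the critical set can be degenerate in exactly the way a naive limiting argument cannot handle. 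The MCF ``mean-convex foliation barrier'' idea you invoke also has no obvious closed-manifold analogue, since the foliation dichotomy of this paper relies on the strictly convex free boundary to kill stable surfaces. In short, your outline reproduces the known heuristics (the $\mathbb{RP}^4$ picture and LS theory) but leaves unproven precisely the two issues --- multiplicity and the generic-to-arbitrary passage --- that make this a conjecture rather than a theorem.
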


\begin{conj}[{minimal disks}]\label{conj2}
Every strictly convex ball in $\mathbb{R}^3$ (or more generally in any 3-manifold with nonnegative Ricci-curvature) contains at least 3 embedded free-boundary minimal 2-disks. 
\end{conj}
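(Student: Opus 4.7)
The plan is to combine three ingredients: a free-boundary mean-convex foliation produced by mean curvature flow, a multi-parameter free-boundary min-max scheme, and a degree-theoretic/Lusternik-Schnirelmann style count.

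First, I would construct a smooth foliation $\{\Sigma_t\}_{t\in[0,1]}$ of $B$ by mean-convex free-boundary disks, interpolating between (a surface close to) $\partial B$ and a point. The natural source is mean curvature flow with free-boundary condition, started from $\partial B$ perturbed slightly inward: the strict convexity of $\partial B$ combined with $\mathrm{Ric}\ge 0$ should preserve mean-convexity and embeddedness along the flow, and drive it monotonically to a round point. Standard parabolic avoidance and comparison principles, combined with a level-set interpretation, then promote this into a (piecewise) smooth foliation of $B$ by mean-convex disks meeting $\partial B$ orthogonally.

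Second, I would feed this foliation into a Simon-Smith/Colding-De Lellis min-max scheme adapted to the free-boundary setting. The $1$-parameter sweepout defines a first width $\omega_1>0$ whose min-max limit is a free-boundary minimal surface; one shows it is an embedded disk via a genus bound along the sweepout together with a free-boundary catenoid estimate ruling out neck-pinching. Passing to $k$-parameter families of sweepouts parametrized by $\mathbb{RP}^k$ (in the Lusternik-Schnirelmann spirit) produces additional widths $\omega_1\le\omega_2\le\omega_3$, each realized by a free-boundary minimal disk. For a generic metric, the free-boundary bumpy metrics theorem makes every such disk non-degenerate, and a Morse/LS argument forces strict inequalities $\omega_1<\omega_2<\omega_3$, yielding three distinct embedded free-boundary minimal disks.

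Without genericity, I would instead argue by degree theory. The foliation from Step 1 provides a canonical deformation in the space of free-boundary embedded disks, from which one can define a degree for the free-boundary minimal-disk equation. Computing this degree with respect to the foliation — using the mean-convexity as a barrier to trap candidate solutions — should give a total count of at least $2$, even when critical points collide.

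The main obstacle I expect is Step 2: controlling topology and multiplicity in the free-boundary min-max limit. Bubbling at the free boundary, collisions of sheets with $\partial B$, and the possibility of the limit appearing with multiplicity $>1$ or the wrong number of boundary components all need to be excluded. The mean-convex foliation from Step 1 is the key tool here: calibrating the sweepouts against it gives sharp area bounds, and the leaves act as barriers preventing the min-max surfaces from leaking into regions that would produce spurious topology. A secondary difficulty is ensuring the degree in the non-generic case is strictly greater than $1$; this requires that the configuration space of free-boundary disks have nontrivial topology detected by the foliation, which is plausible but needs to be made precise.
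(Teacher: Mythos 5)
Your broad architecture (flow-generated foliation, then min-max, then a degree/Lusternik--Schnirelmann count) points in the right direction, but two of your three steps have genuine gaps. First, the foliation step: smooth free-boundary mean curvature flow started near $\partial B$ does \emph{not} in general stay smooth -- neckpinch-type singularities occur even in convex balls with $\mathrm{Ric}\geq 0$ -- so "standard parabolic comparison plus a level-set interpretation" cannot produce the smooth mean-convex foliation you need; one must run the free-boundary flow \emph{with surgery} and then repair the resulting "foliation with gaps" (caused by surgeries and discarded components) by an explicit gluing construction (half marble trees, capped half tubes). Moreover, the foliation that is actually useful is not one sweeping from $\partial B$ to a point, but an \emph{optimal} foliation obtained by flowing outward from both sides of a least-area free-boundary minimal disk, so that this disk sits in the middle as the unique maximal-area leaf with a quadratic area drop $|\Sigma_t|\leq|\Sigma_0|-ct^2$; this quantitative centering is what later feeds the area estimates.

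Second, and more seriously, your min-max step does not confront the central obstruction: \emph{multiplicity}. Even for a bumpy metric, strict inequalities $\omega_1<\omega_2<\omega_3$ between widths of $\mathbb{RP}^k$-type families do not give geometrically distinct disks, because $\omega_2$ could equal $2\omega_1$ and be realized by the first disk with multiplicity two (this is exactly the difficulty flagged in the heuristic count). The catenoid-type estimate is not there to rule out neck-pinching or control genus, as you suggest; its role is to build a two-parameter "flipping" sweepout, obtained by joining two leaves $\Sigma_s,\Sigma_t$ of the optimal foliation along a thin \emph{half}-neck and opening it up, so that $\sup_{s,t}|\Sigma_{s,t}|<2|\Sigma_0|$, which forces $\omega_2<2\omega_1$ and hence excludes the multiplicity-two scenario; the equality case $\omega_2=\omega_1$ is handled by an orientation-flipping Lusternik--Schnirelmann argument giving infinitely many solutions. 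Without this mechanism your scheme yields at most one guaranteed solution. Finally, your unconditional degree count is also not how the two cases are actually resolved: degree theory (\`a la Maximo--Nunes--Smith) is used only for \emph{generic} metrics, where it shows the number of solutions is odd and so upgrades $2$ to $3$; in the non-generic case a degree with respect to the foliation would not by itself produce two distinct embedded solutions, since degenerate critical points can collide -- there the second solution comes from the two-parameter min-max with the strict bound $\omega_2<2\omega_1$.
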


Regarding Conjecture \ref{conj1} (minimal spheres), the first minimal 2-sphere was found by Simon-Smith \cite{SS}, who adapted the more general min-max theory of Almgren and Pitts to the case of surfaces with fixed topology. Later, White \cite{White_deg2} used arguments from degree theory to prove the existence of at least $2$ solutions assuming the metric has positive Ricci curvature. More recently, in our paper \cite{HaslhoferKetover} we combined techniques from mean curvature flow and min-max theory to establish the existence of at least $2$ solutions for generic metrics without any assumptions on the sign of the curvature. Finally, in a recent breakthrough Wang-Zhou \cite{WangZhou} proved the multiplicity-one conjecture for unstable Simon-Smith min-max surfaces, which, incorporating also the techniques from our prior paper \cite{HaslhoferKetover} as well as techniques from Song \cite{Song_inf}, enabled them to establish the existence of at least 4 solutions for any generic metric.\\

Regarding Conjecture \ref{conj2} (minimal disks), a first free-boundary disc, leaving open the question of embeddedness, was found by Struwe \cite{Struwe_fb}, who used a parametric approach. A few years later, Gr\"uter-Jost \cite{GruterJost} (see also Jost \cite{Jost2}) found an embedded solution, by adapting the Simon-Smith approach to the free-boundary setting. This min-max approach to constructing free-boundary minimal disks has been further developed over the years, in particular in important papers by Fraser \cite{Fraser_fb}, DeLellis-Ramic \cite{DLR}, Li \cite{Li}, Li-Zhou \cite{LiZhou}, Lin-Sun-Zhou \cite{LSZ} and Laurain-Petrides \cite{LaurainPetrides}, which however are all concerned with getting 1 solution in greater generality rather than  getting more than 1 solution. The only exception to this we are aware of is an interesting paper by Jost \cite{Jost2}, which obtains 3 solutions for domains close to the round ball.\\

Let us recall the heuristics behind these conjectures. Denoting by $\mathcal{G}$ the space of geodesic spheres in the round $3$-sphere, and identifying the degenerate (i.e. point) spheres $\partial \mathcal{G}$, one has $\mathcal{G}/\partial {\mathcal{G}}\cong \mathbb{RP}^4$. Likewise, for the space of flat disks $\mathcal{F}$ in the standard $3$-ball one has $\mathcal{F}/\partial {\mathcal{F}}\cong \mathbb{RP}^3$. Hence, in light of Hatcher's theorem \cite{Hatcher}, one expects that the area functional has at least $5$ or $4$ critical points, respectively. Since the absolute minimum of the area is zero achieved by a point, one thus expects at least $4$ or $3$ nontrivial solutions, respectively. The major difficulty in turning these heuristics into proofs of the conjectures is that solutions with differing integer multiplicities count as distinct critical points of the area functional and thus may not be geometrically distinct.\\

We also mention that initiated with the proof of the Willmore conjecture by Marques-Neves \cite{MN_Willmore}, there have been spectacular developments in Almgren-Pitts and Allen-Cahn min-max theory, including in particular the existence of infinitely many closed minimal surfaces by Marques-Neves \cite{MN_inf} and Song \cite{Song_inf} and infinitely many free-boundary minimal surfaces by Wang \cite{Wang_inf}, the proof of the Weyl law by Liokumovich-Marques-Neves \cite{LMN}, which led to the equidistribution results by Irie-Marques-Neves \cite{IMN} and Marques-Neves-Song \cite{MNS_equi}, the proof of the multiplicity-one conjecture by Chodosh-Mantoulidis \cite{ChodoshMantoulidis} and Zhou \cite{Zhou_mult1} (see also Sun-Wang-Zhou \cite{SWZ_mult1} and Guaraco-Marques-Neves \cite{GMN}), and the Morse index estimates from Marques-Neves \cite{MN_Morse1,MN_Morse2}. However, unfortunately these theories do not provide any information regarding the question whether or not any of the produced solutions are of disk or sphere type.\\

In this paper, we prove Conjecture \ref{conj2} (minimal disks) for generic metrics, and make progress towards the general case as well:

\begin{theorem}[minimal disks] \label{thm_main}
Every strictly convex 3-ball with nonnegative Ricci-curvature contains at least 3 embedded free-boundary minimal disks for any generic metric, and at least 2 solutions even without genericity assumption. Moreover, the area of our 2nd  solution is always strictly less than twice the area of the Gr\"uter-Jost solution.
\end{theorem}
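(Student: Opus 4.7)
The plan is to combine three tools in sequence, mirroring the structure of our earlier work on minimal $2$-spheres in $S^{3}$: free-boundary mean curvature flow to build efficient sweepouts, Simon-Smith--type min-max adapted to disks to produce critical points, and a degree/Lusternik-Schnirel\-mann argument (aided by White's bumpy-metrics machinery) to upgrade from two to three critical points in the generic case. Throughout, the convex boundary and $\mathrm{Ric}\geq 0$ hypotheses are used to control mean-convex evolutions and to ensure that interior free-boundary minimal disks are stable under the relevant avoidance principles.

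First I would construct a smooth free-boundary mean-convex foliation $\{D_t\}_{t\in[0,1]}$ of the convex ball $\Omega$, with $D_0$ a point, $D_1=\partial\Omega$, and each $D_t$ meeting $\partial\Omega$ orthogonally with strictly positive mean curvature. The natural route is to run a free-boundary level-set flow starting from a small mean-convex perturbation of $\partial\Omega$; nonnegative Ricci plus strict convexity preserve mean-convexity and prevent singularities until extinction at a single interior point. This gives the foliation statement of the abstract and, more importantly, a canonical $1$-parameter sweepout. Feeding this sweepout into the Grüter-Jost free-boundary min-max yields the first embedded free-boundary minimal disk $\Sigma_{1}$, realizing a width $W_{1}>0$.

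For the second solution, I would exploit the topology $\mathcal{F}/\partial\mathcal{F}\cong\mathbb{RP}^{3}$ of the space of flat disks in the round ball. The generator of $H^{1}(\mathbb{RP}^{3};\mathbb{Z}/2)$, cup-squared, supplies a topologically nontrivial $2$-parameter sweepout class, and pulling back the mean-convex foliation along a family of affine disks in $\Omega$ produces a concrete realization of this class. The associated min-max width $W_{2}$ is attained by an embedded free-boundary minimal variety $V_{2}$; standard genus and regularity bounds in the free-boundary setting force its support to be a union of disks. The crux of the argument, and what I expect to be the main obstacle, is to prove the \emph{strict} inequality $W_{2}<2W_{1}$. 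Without it, one cannot exclude $V_{2}=2\Sigma_{1}$ as an integral varifold, in which case no new geometric solution is produced. I would prove this strict inequality by constructing an explicit competitor $2$-parameter sweepout built by running free-boundary mean curvature flow (or Brakke flow) on each slice to efficiently sweep out the region between two nearly-antipodal flat disks, using parabolic comparison against the foliation to quantitatively beat the trivial double-disk competitor. Exactly as in our $S^{3}$ paper, the strict inequality, combined with the regularity/genus bounds, then forces $V_{2}$ to contain an embedded free-boundary minimal disk $\Sigma_{2}\neq\Sigma_{1}$ with $\mathrm{area}(\Sigma_{2})<2\,\mathrm{area}(\Sigma_{1})$, yielding the unconditional part of the theorem.

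Finally, for generic metrics I would produce the third disk by continuing one step higher in the Lusternik-Schnirelmann tower: the cup cube of the generator of $H^{1}(\mathbb{RP}^{3};\mathbb{Z}/2)$ is nonzero, so there is a $3$-parameter sweepout with width $W_{3}$. Under genericity, White's bumpy-metrics theorem adapted to the free-boundary setting ensures that all free-boundary minimal disks are non-degenerate, hence isolated, and that the multiplicity-one phenomenon of Wang-Zhou type holds in our restricted disk category. A signed/degree count then upgrades distinct widths $W_{1}<W_{2}<W_{3}$ to three geometrically distinct embedded solutions; in the non-generic limiting case the widths can coalesce or produce higher-multiplicity limits, which is precisely why the third disk is asserted only for generic metrics. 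The bulk of the genuinely new work is therefore concentrated in the mean-curvature-flow construction underlying $W_{2}<2W_{1}$; the remaining steps are careful free-boundary adaptations of now-standard machinery.
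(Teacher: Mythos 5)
Your overall architecture (flow $+$ disk min-max $+$ a genericity argument) is in the right spirit, but several of the load-bearing steps are either unjustified or aimed at the wrong place. First, the assertion that the free-boundary flow starting near $\partial\Omega$ ``prevents singularities until extinction'' is not true and is not what convexity plus $\mathrm{Ric}\geq 0$ buys you: mean-convex free-boundary flows of disks generically develop neckpinches, and producing a \emph{smooth} foliation is precisely the hard content here --- it requires flow with surgery, a gluing construction that replaces surgery necks and half-necks by thin strings, and an induction over surgery times to convert the ``foliation with gaps'' into a genuine foliation. Second, and more importantly, the foliation you need is not a sweepout from $\partial\Omega$ to a point (that only reproduces the Gr\"uter--Jost disk); what drives the second solution is an \emph{optimal} foliation in which the least-area free-boundary minimal disk $\Sigma_0$ sits as the middle slice and every other slice satisfies a quantitative deficit $|\Sigma_t|\leq|\Sigma_0|-ct^2$. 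Your proposed proof of $W_2<2W_1$ by ``running MCF on each slice with parabolic comparison'' cannot work near the diagonal of the two-parameter family, where both slices degenerate to $\Sigma_0$ itself: $\Sigma_0$ is a fixed point of the flow, so the flow gives no definite area gain exactly where the competitor area approaches $2|\Sigma_0|$. The mechanism that actually closes this is a half-catenoid estimate: because $\Sigma_0$ is unstable (the lowest Robin eigenvalue of the Jacobi operator is negative, using $h>0$ and $\mathrm{Ric}\geq 0$), one can open up the thin half-neck joining two nearby slices with a logarithmically cut-off multiple of the ground state and gain a definite $|\lambda|h^2$-amount of area, yielding $\sup_{s,t}|\Sigma_{s,t}|<2|\Sigma_0|$.

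Two further gaps: even granting $W_2<2W_1$, you must also exclude that the two-parameter width simply equals $W_1$ and returns $\Sigma_1$ with multiplicity one; your ``distinct widths $W_1<W_2$'' is asserted, not proved. The equality case $W_2=W_1$ has to be handled by a Lusternik--Schnirelmann argument that uses the \emph{flipping} structure of the two-sweepout (the boundary slices at $s=-1$ and $s=+1$ traverse the foliation with opposite orientation), which then yields infinitely many solutions of area $W_1$. Finally, your route to the third disk leans on a free-boundary analogue of the Wang--Zhou multiplicity-one theorem and on separating $W_3$ from $W_2$, neither of which is available; the third solution for generic metrics follows much more cheaply from the degree-theoretic parity result of Maximo--Nunes--Smith (for bumpy metrics with positive Ricci the number of embedded free-boundary minimal disks is odd), so that two solutions automatically imply three.
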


In contrast to our prior paper in the closed case \cite{HaslhoferKetover}, where we were only able to find 2 out of the conjectured 4 minimal 2-spheres on 3-spheres with generic metrics, Theorem \ref{thm_main} (minimal disks) is sharp for generic metrics. Namely, it produces all the 3 conjectured solutions.
 

A natural family of examples of 3-balls to illustrate Theorem \ref{thm_main} (minimal disks) are the ellipsoids
\begin{equation}
E(a,b,c):= \left\{\frac{x^2}{a^2}+ \frac{y^2}{b^2}+\frac{z^2}{c^2}\leq 1\right\}\subset\mathbb{R}^3.
\end{equation}
They contain at least 3 obvious `planar' solutions, which are obtained by intersecting $E(a,b,c)$ with the coordinates planes. On the other hand, for sufficiently elongated ellipsoids we obtain: 

\begin{corollary}[ellipsoids]\label{cor_ell}
For $a\geq 2\max (b,c)$ the ellipsoid $E(a,b,c)$ contains a nonplanar embedded free-boundary minimal disk $\Sigma(a)$. Moreover, for $a\to \infty$ our surfaces $\Sigma(a)$ converge in the sense of varifolds to the planar disk $\{x=0\}\times E(b,c)\subset \mathbb{R}\times E(b,c)$ with multiplicity-two.
\end{corollary}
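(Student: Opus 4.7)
The plan is to combine Theorem~\ref{thm_main} with an area comparison for the planar free-boundary minimal disks of $E(a,b,c)$. The ellipsoid is strictly convex and, being Euclidean, has $\mathrm{Ric}\equiv 0$, so Theorem~\ref{thm_main} yields two distinct embedded free-boundary minimal disks, the Gr\"uter--Jost solution $\Sigma_1$ and a second solution $\Sigma_2$, satisfying $\mathrm{area}(\Sigma_2) < 2\,\mathrm{area}(\Sigma_1)$. The one-parameter sweepout by hyperplane slices $\{x=t\}\cap E(a,b,c)$, $t\in[-a,a]$, has maximum cross-sectional area $\pi bc$ (attained at $t=0$), hence $\mathrm{area}(\Sigma_1)\leq \pi bc$ and $\mathrm{area}(\Sigma_2)<2\pi bc$. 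A direct check shows that for non-degenerate ellipsoids the only planar free-boundary minimal disks are the three coordinate-plane slices, with areas $\pi bc$, $\pi ac$, and $\pi ab$; the hypothesis $a\geq 2\max(b,c)$ forces the latter two to be at least $2\pi bc$. Since $\Sigma_1\neq \Sigma_2$, at least one of $\Sigma_1,\Sigma_2$ avoids all three coordinate sections and must therefore be nonplanar; I set $\Sigma(a)$ to be this nonplanar disk, with $\mathrm{area}(\Sigma(a))<2\pi bc$.

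For the varifold convergence as $a\to\infty$, the uniform area bound lets me pass to a subsequential limit $V$ of $\Sigma(a)$, viewed inside the infinite cylinder $\R\times E(b,c)$; it is a free-boundary stationary integral $2$-varifold of mass at most $2\pi bc$. The next step is the structural fact that any compact free-boundary minimal surface $\Sigma\subset \R\times E(b,c)$ is a flat cross-section: indeed, the coordinate $x$ is harmonic on $\Sigma$ and satisfies $\partial_\eta x=0$ along $\partial\Sigma$, since the free-boundary condition forces the outward conormal $\eta$ to coincide with the outward normal of $\R\times \partial E(b,c)$, which is orthogonal to $\partial_x$. The maximum principle then makes $x$ constant on each component, so $V = \sum_i n_i [\{x=x_i\}\cap E(b,c)]$ with $\sum_i n_i \leq 2$.

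The remaining task, and the main obstacle, is to pin down $V=2[\{x=0\}\cap E(b,c)]$. Allard regularity excludes a single multiplicity-one sheet, since smooth graphical convergence to a planar cross-section would contradict the nonplanarity of $\Sigma(a)$. This leaves either the desired multiplicity-two limit at $x=0$ or a sum of two distinct cross-sections $[\{x=x_1\}\cap E(b,c)]+[\{x=x_2\}\cap E(b,c)]$ with $x_1\neq x_2$, corresponding to $\Sigma(a)$ degenerating into two almost-flat sheets joined by a thin catenoidal neck. To rule this out I would invoke the $x\mapsto -x$ symmetry of $E(a,b,c)$, which descends to the min-max construction of $\Sigma_2$ in Theorem~\ref{thm_main} and forces $V$ to be symmetric under $x\mapsto -x$, combined with a catenoid-type neck estimate showing that a persistent neck separating two cross-sections would push the total area strictly above $2\pi bc$ in the limit, contradicting the strict bound $\mathrm{area}(\Sigma(a))<2\pi bc$. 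This pins down $V$ at $x=0$ with multiplicity two, completing the proof.
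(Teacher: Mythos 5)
Your first part is essentially the paper's argument and is fine: the plane sweepout gives $|\Sigma_1|\leq\pi bc$, Theorem \ref{thm_main} gives $|\Sigma_2|<2|\Sigma_1|\leq 2\pi bc$, and since the other planar solutions have area $\pi ab,\pi ac\geq 2\pi bc$ (in the degenerate case $b=c$ the extra planar disks through the $x$-axis also have area $\pi ab\geq 2\pi bc$, so your ``only three coordinate slices'' claim needs that caveat but the conclusion survives), at most one of $\Sigma_1,\Sigma_2$ can be planar. The problems are all in the convergence statement.

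There are three genuine gaps. First, you classify \emph{smooth} compact free-boundary minimal surfaces in $\mathbb{R}\times E(b,c)$ via harmonicity of $x$, and then apply the conclusion to the varifold $V$, which is only a stationary integral varifold; to get the structure of $V$ you need either a varifold argument (e.g.\ the first variation with $X=\phi(x)\partial_x$, admissible since $\partial_x$ is tangent to $\mathbb{R}\times\partial E(b,c)$, forces $\nabla^\Sigma x=0$ $V$-a.e.) or, as the paper does, sliding the flat cross-sections until they touch $\mathrm{supp}\,V$ and invoking the strong maximum principle; moreover you never localize the limit — the paper first uses the Frankel property (each $\Sigma(a)$ meets $\{x=0\}$) together with the monotonicity formula to confine $\Sigma(a)$ to a slab $\{|x|\leq C\}$ and to force $\mathrm{supp}\,V$ to meet $\{x=0\}$, without which $V$ could be trivial or located anywhere. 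Second, and most seriously, your exclusion of multiplicity one is invalid: smooth multiplicity-one graphical convergence to a planar cross-section does \emph{not} contradict nonplanarity of $\Sigma(a)$ — nonplanar minimal surfaces can converge smoothly to a plane. The paper's actual mechanism is a Jacobi-field rigidity argument: if $k=1$, Allard regularity gives smooth convergence, the renormalized graph functions produce a nonzero Jacobi field $u$ on $\{0\}\times E(b,c)$, which is harmonic with zero Neumann data (since $|A|^2$, $\mathrm{Ric}$ and $h(\nu,\nu)$ vanish in the limit), hence constant; but $u$ has a zero because $\Sigma(a)$ meets $\{x=0\}$ by Frankel, so $u\equiv 0$, a contradiction. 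Without this step the multiplicity-two conclusion is unproved. Third, your way of ruling out two distinct cross-sections does not work: the min-max construction carries no built-in $x\mapsto-x$ equivariance, so symmetry of $V$ is unjustified, and the proposed ``catenoid-type neck estimate pushing the area above $2\pi bc$'' points the wrong way — a thin neck contributes negligible area and off-center slices of $E(a,b,c)$ have area strictly less than $\pi bc$, so no such area contradiction arises. The correct route is that $\mathrm{supp}\,V$ must contain the central disk (Frankel plus monotonicity), the sliding argument makes the support a union of cross-sections, and connectedness of the disks $\Sigma(a)$ combined with the monotonicity formula (no limit mass in the intermediate region) excludes two separated sheets.
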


Indeed, the first part follows from the theorem by observing that for $a\geq 2\max (b,c)$ the areas of $xy$-planar disk and $xz$-planar disk are at least twice as large as the one of $yz$-planar disk. This answers a question by Dierkes-Hildebrandt-K\"uster-Wohlrab \cite{DHKW}. In the circular symmetric case, i.e. for $b=c$, the corollary has been obtained recently by Petrides \cite{Petrides_disks}, using entirely different techniques.\\

Similarly as in our previous paper in the closed case \cite{HaslhoferKetover}, our approach combines ideas from mean curvature flow and min-max theory. Specifically, using the free-boundary flow with surgery from recent work of the first author \cite{Haslhofer_fb_surgery}, we produce smooth free-boundary mean-convex foliations. More precisely, we prove:

\begin{theorem}[free-boundary foliation dichotomy]\label{thm_fol}
Let $M$ be a 3-ball equipped with an arbitrary Riemannian metric with strictly convex boundary, and suppose $\partial K\subset M$ is a smooth free-boundary strictly mean-convex disk. Then, one of the following alternatives holds true:
\begin{enumerate}
\item\label{fol_case1} There exists a stable embedded free-boundary minimal disk $\Sigma\subset\mathrm{Int}_M(K)$ or a stable embedded minimal two-sphere $\Gamma\subset\mathrm{Int}_M(K)\setminus\partial M$.
\item\label{fol_case2} There exists a smooth foliation $\{ \Sigma_t \}_{t\in [0,1]}$ of $K$ by free-boundary strictly mean-convex disks. More precisely:
\begin{itemize}
\item $\{\Sigma_t\}_{t\in [0,1)}$ is a smooth family of properly embedded free-boundary strictly mean-convex disks starting at $\Sigma_0=\partial K$.
\item $\cup_{t\in [0,1]}\Sigma_t = K$, and $\Sigma_{t_1}\cap \Sigma_{t_2}=\emptyset$ whenever $t_1\neq t_2$.
\item $\Sigma_t$ for $t\to 1$ Hausdorff converges to a point on $\partial M$.\footnote{One can arrange that the foliation terminates in a round half-point.}
\end{itemize}
\end{enumerate}
\end{theorem}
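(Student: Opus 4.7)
The plan is to run the free-boundary mean curvature flow with surgery (FBMCF-S) developed in \cite{Haslhofer_fb_surgery} starting from $\partial K$ inside $K$, and to show that either this flow produces a stable minimal surface as an obstruction, or it provides the foliation directly.

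\textbf{Step 1 (Setup).} I would start the free-boundary flow with surgery from the initial datum $\partial K$, constrained to lie inside $K$. The strict convexity of $\partial M$ acts as a barrier preventing the flow from leaving $K$ across $\partial M$ away from the free boundary, and strict mean-convexity of the evolving surfaces is preserved. Between surgery times the flow is a smooth family of free-boundary strictly mean-convex surfaces moving strictly inward into $K$; at surgery times, one performs either free-boundary cap discarding or an interior neck surgery, which may detach closed mean-convex components (topologically 2-spheres).

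\textbf{Step 2 (Extracting a stable minimal surface).} Suppose the flow does not shrink to a single point on $\partial M$ in finite time. Then either some surgery-produced 2-sphere component persists away from $\partial M$ with uniformly bounded geometry, or the main free-boundary disk component does. Passing to a subsequential limit as $t \to \infty$ via local regularity for FBMCF-S and exploiting the stability inherited as a one-sided limit of strictly mean-convex surfaces (in the spirit of White's and Hershkovits--White's arguments in the closed case), I would obtain either a stable embedded free-boundary minimal disk $\Sigma \subset \mathrm{Int}_M(K)$ or a stable embedded interior minimal 2-sphere $\Gamma \subset \mathrm{Int}_M(K) \setminus \partial M$, placing us in case \eqref{fol_case1}.

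\textbf{Step 3 (Smooth foliation from a smooth flow).} If case \eqref{fol_case1} does not occur, then every 2-sphere component produced by a neck surgery must eventually vanish (else Step 2 applies contrapositively). Using the canonical neighborhood theorem and the free-boundary convexity estimates of \cite{Haslhofer_fb_surgery}, together with a perturbation of the surgery parameters analogous to the one used in \cite{HaslhoferKetover}, I would re-route the flow as a genuinely smooth free-boundary mean-convex evolution that contracts $\partial K$ to a single point $p \in \partial M$. The arrival time function $u : K \to [0,1]$ is then smooth with $|\nabla u| > 0$ on $K \setminus \{p\}$ and satisfies a Neumann condition along $\partial M \cap K$ coming from the free-boundary condition, so its level sets $\Sigma_t := u^{-1}(t)$ provide the desired foliation by free-boundary strictly mean-convex disks. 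The round half-point termination claimed in the footnote then follows from the free-boundary convexity estimate applied on the final time-slices, which forces the rescaled flow near $p$ to converge to a round shrinking half-sphere.

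\textbf{Main obstacle.} The hardest part will be Step~3: showing that in the absence of case \eqref{fol_case1} the flow can actually be realized as a smooth foliation rather than a genuine surgery flow. This splits into (i) excluding persistent closed 2-sphere components arising from interior neck-pinches, which I expect to handle contrapositively via the stability argument of Step~2, and (ii) upgrading the weak arrival-time description to a smooth free-boundary foliation up to and along $\partial M$, which will rely on the free-boundary canonical neighborhood theorem and convexity estimates of \cite{Haslhofer_fb_surgery}. Once these are in hand, the foliation property and the round half-point asymptotics follow by standard arguments adapted from the closed case.
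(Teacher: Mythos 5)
Your Steps 1--2 are broadly in line with the paper: one runs the free-boundary flow with surgery of \cite{Haslhofer_fb_surgery} inside $K$, and the long-time alternative of \cite[Theorem 1.1]{Haslhofer_fb_surgery} directly yields the stable free-boundary minimal disk or stable interior minimal two-sphere of case (1) (no separate one-sided-limit stability argument is needed). The serious problem is Step 3. You claim that, in the absence of case (1), one can ``re-route the flow as a genuinely smooth free-boundary mean-convex evolution'' by perturbing the surgery parameters, and then read off the foliation from a smooth arrival-time function $u$ with $|\nabla u|>0$. This cannot work: interior neck-pinches and boundary half-neck-pinches are not removable by tuning surgery parameters (they occur stably, e.g.\ for dumbbell-shaped free-boundary disks), and neither \cite{HaslhoferKetover} nor the present paper ever produces a smooth flow in this way. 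At a surgery or discarding time the domain changes topology, so the level sets of any arrival-time description jump, become disconnected, or cease to be disks; the flow with surgery only yields a ``foliation with gaps,'' and filling those gaps is precisely the content that your proposal is missing.

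The paper's actual mechanism is quite different and is the substantive part of the proof: instead of cutting out necks and half necks, they are deformed to tiny strings and half strings via a gluing map (Proposition 3.2, adapted from \cite{BHH} to the half-space setting); discarded components, which by the canonical neighborhood theorem are convex pieces, capped $\eps$-tubes, or capped half $\eps$-tubes, are shown to admit \emph{perfect isotopies} (strictly monotone, strictly mean-convex, free-boundary isotopies) to ``half marble trees'' (Proposition 3.3); and then a backwards induction over the surgery times (Claims 3.6--3.7) glues these isotopies across surgeries, so that the original domain $K$ is perfectly isotopic to a single half marble tree, which is finally shrunk in a perfect way to a point on $\partial M$. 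The foliation is the family of boundaries of this monotone isotopy, not the level sets of an arrival time. Without this gluing/marble-tree machinery (or an equivalent substitute), your Step 3 has no route from the surgery flow to a smooth foliation, so the proposal as written has a genuine gap at its central step.
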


We note that in our application we are always in case (\ref{fol_case2}). We have stated Theorem \ref{thm_fol} (free-boundary foliation dichotomy) in its more general form since it is clearly of independent interest, and will likely have several other applications in the geometry of three-manifolds, possibly even in the study of inverse problems \cite{ABN}. We also note that the flow with surgery at first only produces a ``foliation with gaps", caused by surgeries and discarding, so we have to fill in these gaps using a strategy as in the work of Buzano, Hershkovits and the first author \cite{BHH}, which in turn was inspired by important work by Marques \cite{Marques_Ricci_moduli}.\\

To outline our proof of Theorem \ref{thm_main} (minimal disks), let us fix any compact connected 3-manifold $M$ with nonnegative Ricci curvature and nonempty strictly convex boundary (it follows from the work of Meeks-Simon-Yau \cite{MSY} that any such manifold is indeed diffeomorphic to the 3-ball). Recall that Gr\"uter-Jost \cite{GruterJost} already proved the existence of at least 1 solution. Moreover, by a beautiful degree theory argument of Maximo-Nunes-Smith \cite{MNS,MNS2} for generic metrics the number of solutions is always odd. Hence, our task is to produce a 2nd solution.\\

To get started, sliding the Gr\"uter-Jost disk a bit to both sides we can decompose $M=K^-\cup Z \cup K^+$, where $Z$ is a short cylindrical region and $\partial K^\pm$ are smooth strictly mean-convex disks with free-boundary.
Applying Theorem \ref{thm_fol} (free-boundary foliation dichotomy) we can build an optimal free-boundary foliation of $M$, namely a foliation $\{\Sigma_t\}_{t\in [-1,1]}$ of $M$ by free-boundary disks, such that the Gr\"uter-Jost disk sits in the middle of the foliation as $\Sigma_{0}$ and all other slices have strictly less area.\\

Using our optimal foliation we can then form a certain two parameter family $\{\Sigma_{s,t}\}$. Loosely speaking, this family is constructed by joining the surfaces $\Sigma_s$ and $\Sigma_t$ by a thin half neck.  
Establishing a half version of the catenoid estimate from Marques, Neves and the second author \cite{KMN}, we can suitably open up the half neck to arrange that
\begin{equation}\label{upperbound}
\sup_{s,t} |\Sigma_{s,t}| < 2|\Sigma_{0}|.
\end{equation}
This guarantees that min-max for our two-parameter family does not simply produce the Gr\"uter-Jost disk with multiplicity-two, and together with a standard Lusternik-Schnirelmann argument to handle the case when the min-max value for our two-parameter family equals the area of the Gr\"uter-Jost disk, allows us to conclude.\\

Finally, let us comment on potential other approaches towards Conjecture \ref{conj2} (minimal disks). Given the recent breakthrough in the closed case by Wang-Zhou \cite{WangZhou}, it seems likely that their method will generalize to the free-boundary setting as well. Their approach has the advantage that it potentially also applies to other related problems, while our approach has the advantage that it in addition produces smooth free-boundary foliations. In a different direction, there is a correspondence between free-boundary minimal disks and half-harmonic geodesics as first pointed out by DaLio-Martinazzi-Riviere \cite{DMR} and Millot-Sire \cite{MillotSire}, which suggests a more analytic (nonlocal PDE) construction of free-boundary minimal disks. Furthermore, the corresponding half-harmonic gradient flow has been explored in recent interesting work by Wettstein \cite{Wett1,Wett2} and Struwe \cite{Struwe_half}.\\

\bigskip

{\bf Acknowledgements:} R.H. was partially supported by an NSERC discovery grant and a Sloan Research Fellowship. D.K. was partially supported by NSF grant DMS-1906385.\\

\bigskip

\section{The min-max argument}

Throughout this section, $(M,g)$ denotes a 3-ball with nonnegative Ricci curvature and strictly convex boundary. We recall the following basic properties (see e.g. \cite[Section 2]{FraserLi}), which will be used frequently:

\begin{itemize}
\item Every free-boundary minimal surface in $M$ is unstable.
\item There are no closed minimal surfaces in $\mathrm{Int}(M):=M\setminus \partial M$.
\item Any two free-boundary minimal surfaces in $M$ intersect.
\end{itemize}

Indeed, to see this, recall that the second variation of area at a free-boundary minimal surface $\Sigma\subset M$ is given by the quadratic form
\begin{align}\label{2nd_var}
Q_{\Sigma}[\phi]=\int_{\Sigma} |\nabla \phi|^2 -  \left( |A|^2+\mathrm{Ric}(\nu,\nu)\right)\phi^2-\int_{\partial \Sigma}  h(\nu,\nu)\phi^2,
\end{align}
where $h$ denotes the second fundamental form of $\partial M$. Since $h>0$ and $\mathrm{Ric}\geq 0$ by assumption, we see that $Q_{\Sigma}[1]<0$. This shows that there are no stable free-boundary minimal surfaces in $M$. Next, if there was a closed minimal surfaces in $\mathrm{Int}(M)$, then at a point that minimizes the distance to $\partial M$ we would get a contradiction with the second variation of length formula (see e.g. \cite[Appendix A]{Wang_math_ann}). Finally, if the Frankel property failed then thanks to the instability we could push one surface towards the other (see below) and obtain a contradiction with the avoidance principle under free-boundary level set flow \cite{EHIZ}.\\

Also note that any embedded free-boundary minimal surface $\Sigma\subset M$ is properly embedded, i.e. satisfies $\partial\Sigma=\Sigma\cap \partial M$. Indeed, since $\partial M$ has strictly positive mean curvature it cannot touch any interior point of $\Sigma$.\\

\subsection{Sweepouts by disks}
We now discuss min-max theory applied to the free-boundary setting. 
Suppose $\{\Sigma_t\}_{t\in I^n}$, where $I^n=[-1,1]^n$, is a family of closed sets contained in $M$. 

\begin{definition}[sweepout by disks]
We say $\{\Sigma_t\}_{t\in I^n}$ is a \emph{sweepout of $M$ by disks} if there exists a subset $E\subset\partial I^n$, such that:
\begin{enumerate}
\item $\mathcal{H}^2(\Sigma_t)$ is a continuous function of $t\in I^n$
\item  $\Sigma_t$ converges to $\Sigma_{t_0}$ in the Hausdorff topology as $t\rightarrow t_0$.  
\item For $t\in I^n\setminus E$, $\Sigma_t$ is a smooth properly embedded disk in $M$, i.e. $\partial\Sigma_{t}=\Sigma_t\cap \partial M$, that depends smoothly on $t\in I^n\setminus E$.
\item For $t\in E$, $\Sigma_t$ consists of finitely many points and arcs together (possibly) with a smooth properly embedded disk.
\end{enumerate}
\end{definition} 

Given a sweepout of $M$ by disks $\{\Sigma_t\}_{t\in I^n}$ we define its saturation by
\begin{equation}
\Pi:= \big\{ \psi_t(\Sigma_t)\, | \, \{\psi_t\}_{t\in I^n} \, \textrm{is an isotopy with} \, \psi_t =\mathrm{id} \, \mathrm{for} \, t\in \partial I^n \big\}.
\end{equation}
The min-max width of $\Pi$ is then defined as
\begin{equation}
\omega_\Pi=\inf_{\{\Gamma_t\} \in\Pi}\sup_{t\in I^n} |\Sigma_t|,
\end{equation}
where $|\Sigma_t|:= \mathcal{H}^2(\Sigma_t)$. We recall the following foundational theorem:

\begin{theorem}[min-max theorem \cite{GruterJost}, \cite{Jost2}, \cite{Franz}]\label{gj}
Let $\{\Sigma_t\}_{t\in I^n}$ be a sweepout of $M$ by disks and let $\Pi$ be its saturation. Suppose that
\begin{equation}
\omega_\Pi>\sup_{t\in\partial I^n}|\Sigma_t|.
\end{equation}
Then, there exists a suitable minimizing sequence $\{\Sigma_t^j\} \in \Pi$ and a sequence $t_j\in I^n$ with  $|\Sigma^j_{t_j}|\to \omega_\Pi$, such that
$\Sigma^j_{t_j}$ converges in the sense of varifolds to 
 an embedded free-boundary minimal disk $\Sigma\subset M$ with positive integer multiplicity $k$. In particular,
 \begin{equation}
\omega_\Pi = k|\Sigma|.
\end{equation}
Moreover,
\begin{equation}\label{indexbound}
\mbox{index}(\Sigma)\leq n.
\end{equation}
\end{theorem}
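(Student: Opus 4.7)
The plan is to follow the Almgren--Pitts/Simon--Smith min-max scheme, adapted to the free-boundary setting as initiated by Gr\"uter--Jost and refined in later work. First, I would start with an arbitrary minimizing sequence $\{\Sigma_t^j\}\in\Pi$, i.e.\ $\max_{t\in I^n}|\Sigma_t^j|\to \omega_\Pi$. The strict inequality $\omega_\Pi>\sup_{t\in\partial I^n}|\Sigma_t|$ ensures that for $j$ large every almost-maximal slice is parametrized by a compact subset of $\mathrm{Int}(I^n)$, away from $\partial I^n$. I would then run the standard pull-tight procedure: construct a continuous vector field on the space of admissible slices that strictly decreases area on slices which are far from being stationary with respect to the free-boundary first variation, and vanishes on (approximately) stationary ones. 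Composing the family with the flow of this vector field yields a new minimizing sequence such that every subsequential varifold limit $V$ arising from times $t_j$ with $|\Sigma_{t_j}^j|\to\omega_\Pi$ is stationary with free boundary in $\partial M$.

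Second, to upgrade stationarity to regularity and to identify the support as a disk, I would invoke Pitts' combinatorial argument in its free-boundary version to pass to a further subsequence for which $V$ is almost-minimizing in small annuli centered at interior points and small half-annuli centered at boundary points of $V$. Applying Gr\"uter--Jost interior regularity together with its boundary counterpart then shows that $V$ is supported on a smooth embedded free-boundary minimal surface $\Sigma\subset M$ with positive integer multiplicity. The topological subtlety here is that every replacement used in the pull-tight and almost-minimizing steps must be realizable by an isotopy through properly embedded disks in the Simon--Smith sense; this ensures that the disk topology survives under varifold convergence and forces $\Sigma$ to be a disk. Combined with the Frankel property recorded at the beginning of Section~2, $\Sigma$ is connected, so $V=k[\Sigma]$ for some integer $k\geq 1$ and hence $\omega_\Pi=k|\Sigma|$.

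The main obstacle will be the index bound $\mathrm{index}(\Sigma)\leq n$. I would argue by contradiction in the spirit of the Marques--Neves deformation theorem. Assume $\mathrm{index}(\Sigma)\geq n+1$; then there exists an $(n+1)$-dimensional space of free-boundary-admissible variations on which the Jacobi form $Q_\Sigma$ from \eqref{2nd_var} is negative definite. Using these variations one constructs a smooth $(n+1)$-parameter local family $\Phi\colon B^{n+1}_\delta(0)\to\{\text{properly embedded free-boundary disks near }\Sigma\}$ with $\Phi(0)=\Sigma$ and $|\Phi(v)|\leq|\Sigma|-c|v|^2$. Since the parameter dimension $n$ is strictly less than $n+1$, a standard interpolation/splicing argument allows one to modify $\{\Sigma_t^j\}$ in a small neighborhood of $t_j$ by inserting slices drawn from $\Phi$, chosen continuously in $t$, so that every new slice has area strictly below $\omega_\Pi$, while the family is left unchanged away from that neighborhood. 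This produces a competitor in $\Pi$ with $\sup_t|\Sigma_t|<\omega_\Pi$, contradicting the definition of $\omega_\Pi$. The delicate point is performing this splicing while (i) remaining inside the isotopy saturation $\Pi$ (hence identity on $\partial I^n$), (ii) respecting the multiplicity $k$ appearing in the varifold limit, and (iii) preserving the disk topology throughout the deformation.
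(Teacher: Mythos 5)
The paper does not actually reprove this theorem: it treats it as a black box, invoking Franz's free-boundary min-max theorem \cite{Franz} (with trivial equivariance group) as a single reference, crediting Gr\"uter--Jost \cite{GruterJost} and Jost \cite{Jost2} for the existence, regularity and disk-type part, and using the Frankel property only to upgrade the a priori disconnected limit to a connected disk --- the same connectedness device you use. So at the level of strategy (pull-tight, almost-minimizing in annuli and half-annuli, interior plus free-boundary regularity, Frankel for connectedness, deformation argument for the index bound) your outline is exactly the route the cited works carry out; there is no divergence in approach.

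However, read as a proof rather than a roadmap, your write-up has genuine gaps, and they sit precisely where the cited papers do the work. First, topology: varifold convergence does not preserve topological type, and the fact that the min-max limit of a sweepout by disks is again an embedded disk does not follow from the replacements ``being realizable by isotopies''; it requires a separate topological-control (lifting/genus-type) argument adapted to the free-boundary setting, and the boundary regularity and embeddedness at $\partial M$ is itself a substantial step --- this is the content of \cite{GruterJost}, \cite{Jost2}, \cite{Franz}, not a remark. Second, and more seriously, your index-bound argument as described fails when the multiplicity satisfies $k\geq 2$: the almost-maximal slices $\Sigma^j_{t_j}$ are close to $k\Sigma$ as varifolds, not small graphical perturbations of $\Sigma$, so one cannot simply ``insert slices drawn from $\Phi$'' (an $(n+1)$-parameter family of perturbations of the multiplicity-one surface) continuously near $t_j$ to build a competitor in $\Pi$; producing such a competitor while staying in the isotopy class requires the full Marques--Neves-type deformation machinery transplanted to the free-boundary Simon--Smith setting, which is precisely Franz's contribution. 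You flag these as ``delicate points,'' but they constitute the actual theorem; either they must be carried out in detail, or one should do what the paper does and quote \cite[Theorem 1.10]{Franz} directly, adding only the Frankel-property observation to get connectedness.
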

Readers who prefer a single reference for the proof of the whole theorem can apply the min-max theorem of Franz \cite[Theorem 1.10]{Franz} in the special case where the equivariance group is trivial.\footnote{At first one only obtains a potentially disconnected union of minimal disks, but by the Frankel property the limiting stationary varifold must be connected.}
 Historically, except for the index bounds which are due to Franz, the theorem was first obtained by Gr\"uter-Jost \cite{GruterJost} and Jost \cite{Jost2}, who generalized the theory of Simon-Smith \cite{SS} to the setting with boundary.\\
 
In particular, for any diffeomorphism $\psi$ from $M$ to the closed unit ball $B\subset \mathbb{R}^3$, one can consider the fundamental 1-parameter sweepout
\begin{equation}
\Sigma_t = \psi^{-1}(B\cap \{x_3 = t\}),
\end{equation}
where $t\in [-1,1]$. Denoting by $\Pi_1$ the saturation of this sweepout, thanks to the isoperimetric inequality its min-max width satisfies
\begin{equation}\label{onesweepouts}
\omega_1>0.
\end{equation}
Considering this 1-parameter sweepout Gr\"uter-Jost were able to obtain the existence of at least 1 embedded free-boundary minimal disk.   

\subsection{Optimal foliations} Let us introduce the following notion:

\begin{definition}[optimal foliation] An \emph{optimal foliation} of $M$ is a 1-parameter family of subsets $\{\Sigma_t\}_{t\in I}$, such that
\begin{enumerate}
\item $\{\Sigma_t\}_{t\in \mathrm{Int}(I)}$ is a smooth family of properly embedded free-boundary disks,
\item $\Sigma_{\pm 1}=\{p_{\pm}\}$ for some $p_{\pm}\in \partial M$, and $\Sigma_t$ for $t\to \pm 1$ converges to $\Sigma_{\pm}$ in the Hausdorff sense and to $0$ in the sense of varifolds,
\item $\cup_{t\in I}\Sigma_t=M$, and $\Sigma_{t_1}\cap\Sigma_{t_2}=\emptyset$ whenever $t_1\neq t_2$,
\item $\Sigma_0$ is an embedded free-boundary minimal disk that satisfies $|\Sigma_0|=\omega_1$,
\item $|\Sigma_t| \leq |\Sigma_0| - ct^2$  for all  $t\in I$, where $c=c(M)>0$.\label{a}
\end{enumerate}
\end{definition}

As a consequence of Theorem \ref{thm_fol} (free-boundary foliation dichotomy), which will be proved independently in the next section, we obtain:

\begin{proposition}[optimal foliation]\label{optimalfoliation}
$M$ admits an optimal foliation.  
\end{proposition}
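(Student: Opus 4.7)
The plan is to apply the free-boundary foliation dichotomy (Theorem \ref{thm_fol}) on each side of the Gr\"uter--Jost disk, and then to upgrade the resulting mean-convex monotonicity of area into a quadratic estimate using the instability of $\Sigma_0$.

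First, let $\Sigma_0$ be the embedded free-boundary minimal disk of area $\omega_1$ produced by Theorem \ref{gj} for the 1-parameter sweepout, and write $M=M^-\cup\Sigma_0\cup M^+$ for the decomposition into the two half-balls cut off by $\Sigma_0$. By the list at the beginning of this section, $\Sigma_0$ is unstable, so the first Robin--Jacobi eigenvalue $\lambda_1<0$ and the first eigenfunction $\phi>0$ solves $L\phi=\lambda_1\phi$ on $\Sigma_0$ with the Robin boundary condition $\partial_\eta\phi=h(\nu,\nu)\phi$ on $\partial\Sigma_0$. Normal graphs over $\Sigma_0$ in the directions $\pm\eps\phi\nu$, corrected to order $\eps^2$ via the implicit function theorem so as to satisfy the free-boundary condition exactly, yield for small $\eps>0$ smooth embedded free-boundary disks $\partial K^\pm\subset M^\pm$ whose mean curvature is $-\eps\lambda_1\phi+O(\eps^2)$; with the orientation of $\nu$ pointing into $K^\pm$, both $\partial K^\pm$ are strictly mean-convex.

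Next, apply Theorem \ref{thm_fol} separately to the two mean-convex regions $K^\pm$. Case (\ref{fol_case1}) of the dichotomy is ruled out by the opening observations of this section: there are no stable free-boundary minimal disks in $M$ and no closed minimal two-spheres in $\mathrm{Int}(M)$. Therefore each $K^\pm$ is smoothly foliated by free-boundary strictly mean-convex disks $\{\Sigma^\pm_s\}_{s\in[0,1]}$, with $\Sigma^\pm_0=\partial K^\pm$ and $\Sigma^\pm_s$ Hausdorff-converging to a point $p_\pm\in\partial M$ as $s\to 1$. Splicing these foliations with the short smooth 1-parameter families of perturbed disks running from $\Sigma_0$ to $\partial K^\pm$, and reparametrizing, yields a family $\{\Sigma_t\}_{t\in[-1,1]}$ satisfying properties (1)--(4) of an optimal foliation: smoothness on the interior comes from the smoothness of each piece and the smooth perturbation near $t=0$; Hausdorff convergence to $\{p_\pm\}$ at $t=\pm 1$ comes from Theorem \ref{thm_fol}; and varifold convergence to $0$ follows because a free-boundary disk contained in an $r$-neighborhood of a point of $\partial M$ has area $O(r^2)$.

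The one genuinely new ingredient is property (\ref{a}), the quadratic decay $|\Sigma_t|\leq|\Sigma_0|-ct^2$. Parametrize the family near $t=0$ so that the variation vector at $t=0$ is $\phi\nu$; then, because $\Sigma_0$ is minimal and free-boundary and $\phi$ satisfies the Robin condition, the second variation formula \eqref{2nd_var} gives
\begin{equation}
\frac{d^2}{dt^2}\Big|_{t=0}|\Sigma_t|=Q_{\Sigma_0}[\phi]=\lambda_1\int_{\Sigma_0}\phi^2<0,
\end{equation}
so $|\Sigma_t|\leq|\Sigma_0|-c_0 t^2$ for $|t|$ small. For $|t|$ bounded away from $0$, strict mean-convexity of all slices forces $|\Sigma_t|<|\Sigma_0|$, so the continuous function $|\Sigma_0|-|\Sigma_t|$ attains a positive minimum on $\{\delta\leq|t|\leq 1\}$ by compactness; a piecewise-smooth reparametrization of $t$ that fixes $t=0$, together with shrinking $c$ if necessary, yields the global bound. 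The main obstacle will be the initial construction of $\partial K^\pm$: one must ensure the perturbation simultaneously realizes the free-boundary condition exactly and converts the degenerate mean curvature of $\Sigma_0$ into strictly signed mean curvature pointing into $K^\pm$, which rests precisely on the coupling between the Jacobi operator, its Robin boundary condition, and the strict negativity of $\lambda_1$.
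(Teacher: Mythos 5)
Your construction follows the paper's route (perturb the minimal disk by the positive Robin ground state $\phi$ of the Jacobi operator, apply Theorem \ref{thm_fol} to the two mean-convex pieces, rule out case (\ref{fol_case1}) by instability and the absence of interior closed minimal surfaces, and get the quadratic decay from the second variation plus a reparametrization), but there is a genuine gap at property (4) of the definition of optimal foliation. You take $\Sigma_0$ to be ``the embedded free-boundary minimal disk of area $\omega_1$ produced by Theorem \ref{gj}.'' The min-max theorem only produces a disk $\Sigma$ with $\omega_1=k|\Sigma|$ for some positive integer $k$; if $k\geq 2$, the disk you foliate around has area $\omega_1/k<\omega_1$, and nothing in your argument excludes this --- it is precisely the multiplicity issue that the introduction identifies as the main difficulty. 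The paper sidesteps it by choosing $\Sigma_0$ to be a \emph{least-area} embedded free-boundary minimal disk (existence via Gr\"uter--Jost together with the Fraser--Li compactness theorem), running essentially your perturbation-plus-dichotomy construction around that disk, and only then deducing $|\Sigma_0|=\omega_1$ a posteriori: the resulting foliation is itself (up to the degenerate endpoints) a competitor sweepout whose maximal slice is $\Sigma_0$, so $\omega_1\leq|\Sigma_0|$, while $\omega_1=k|\Sigma'|\geq|\Sigma'|\geq|\Sigma_0|$ by the least-area choice, forcing equality and multiplicity one. A repair along these lines (or the variant: your own foliation around the min-max disk gives $\omega_1\leq|\Sigma|=\omega_1/k$, hence $k=1$) is needed and is absent from your write-up.

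A smaller point: your justification of the varifold convergence to $0$ at $t=\pm1$, namely that ``a free-boundary disk contained in an $r$-neighborhood of a boundary point has area $O(r^2)$,'' is false for arbitrary disks --- Hausdorff smallness does not control area. One should instead use the structure of the foliation provided by Theorem \ref{thm_fol}, e.g.\ that it can be arranged to terminate in a round half-point (as noted in the footnote there), together with the monotone decrease of area along the mean-convex leaves. With these two repairs your argument coincides with the paper's proof; the technical difference you emphasize (correcting the normal graph by the implicit function theorem to achieve the free-boundary condition exactly, rather than flowing along an admissible extension of $\phi\nu$ as the paper does) is harmless and not where the real difficulty lies.
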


\begin{proof}
As recalled above, there exists some embedded free-boundary minimal disks in $M$ by the result of Gr\"uter-Jost, i.e. by Theorem \ref{gj} (min-max theorem) applied to the fundamental 1-parameter sweepout. Together with the compactness result of Fraser-Li \cite{FraserLi} it follows that there exists some embedded free-boundary minimal disk $\Sigma\subset M$ with least area.\\
 
 To proceed, let us rewrite \eqref{2nd_var} in the form
\begin{align}\label{2nd_var_rew}
Q_{\Sigma}[\phi]=-\int_{\Sigma} \phi L_{\Sigma} \phi  +\int_{\partial \Sigma} \phi(\nabla_n \phi -  h(\nu,\nu)\phi),
\end{align}
where $n$ denotes the outwards unit conormal, and where
\begin{equation}
L_{\Sigma}=\Delta+|A|^2+\mathrm{Ric}(\nu,\nu).
\end{equation}
Consider the lowest eigenvalue with Robin boundary condition, namely
\begin{equation}\label{rayleigh}
\lambda :=\inf \left\{  Q_{\Sigma}[\phi]\, : \, \int_{\Sigma} \phi^2=1   \mbox{ and } \nabla_n \phi =  h(\nu,\nu)\phi  \mbox{ on } \partial \Sigma  \right\}.
\end{equation}
Since $\Sigma$ is unstable, we have $\lambda < 0$. By standard elliptic theory there exists a unique Robin ground state, i.e. a smooth positive function $\phi:\Sigma\to \mathbb{R}_{+}$ normalized such that $\int_{\Sigma}\phi^2=1$, that solves 
\begin{equation}
L_{\Sigma}\phi= \lambda\phi \mbox{ in } \partial\Sigma\quad  \mbox{ and} \quad
\nabla_n\phi = h(\nu, \nu)\phi \mbox{ on } \partial\Sigma. 
\end{equation}
Indeed, considering a minimizing sequence for \eqref{rayleigh} one can first establish the existence of a nonnegative solution, and then the strong maximum principle yields strict positivity and uniqueness.\\

Now, extend $\phi\nu$ to an admissible vector field $X$, namely a vector field on $M$  satisfying $X(p)\in T_p \partial M$ for $p\in \partial M$, and let $\psi_t$ be its flow. Then, setting $\Sigma_t:=\psi_t(\Sigma)$, for $|t|$ small enough we can expand
\begin{equation}\label{taylor1}
H_{\Sigma_t} = t|\lambda|\phi+O(t^2),
\end{equation}
and
\begin{equation}\label{taylor2}
|{\Sigma_t}| =|{\Sigma_0}| -\frac{1}{2}|\lambda|t^2+O(t^3).
\end{equation}
Hence, choosing $\eps>0$ small enough, $\{\Sigma_t\}_{t\in [-\eps,\eps]}$ can be extended to an optimal foliation of $M$ thanks to Theorem \ref{thm_fol} (free-boundary foliation dichotomy). In particular, note that we indeed have $|\Sigma_0|=\omega_1$, since $\Sigma_0$ is realized as maximal area slice of our foliation.\footnote{Moreover, this also implies that $\Sigma_0$ is realized by 1-parameter min-max with multiplicity 1, which sweepingly justifies the name ``Gr\"uter-Jost disk".}
\end{proof}

\subsection{Half-catenoid estimate}

In this subsection, we prove a half-catenoid estimate, by adapting the argument from \cite{KMN} to the setting with boundary.
In contrast to \cite{Wang_math_ann}, where Wang implemented the catenoid estimate for sweepouts by Caccioppoli sets, c.f. \cite{DLR}, in our context we need to be careful to preserve the topological type of the sweepout.  In particular, we have to use half-necks instead of necks and we cannot let them close up too much as otherwise the strict convexity of the boundary would disconnect the surfaces into multiple pieces.\\

Let $\{\Sigma_t\}_{t\in [-1,1]}$ be an optimal foliation of $M$. Following two points $p,q\in \Sigma_0\cap\partial M$ we get two smooth embedded arcs $\alpha = \{\alpha_t\}_{t\in [-1,1]}$ and $\beta = \{\beta_t\}_{t\in [-1,1]}$ in $\partial M$, with $\alpha_0=p$ and $\beta_0=q$ and $\alpha\cap\beta =\{\alpha_{-1},\alpha_{1}\}$, such that $\alpha\cap\Sigma_t = \{\alpha_t\}$ and $\beta\cap\Sigma_t = \{\beta_t\}$ for all $t\in[-1,1]$.\\

Note that we can retract $M\setminus \alpha$ to $\beta$ preserving the leaves of our foliation. More precisely, we can choose a smooth retraction map
\begin{equation}
R:M\setminus\alpha\times [0,1]\to M\setminus\alpha,
\end{equation}
such that setting $\Sigma_t' = \Sigma_t\setminus\alpha_t$, $t\in(-1,1)$, for some constants $t_0>0$ and $\mu_0>0$, and some smooth function $\sigma(t,\mu)\geq 0$, we have
\begin{enumerate}
\item $R(\Sigma_t',\mu_2)\subseteq R(\Sigma_t',\mu_1)\subseteq \Sigma_t'$ for all $t$ and all $\mu_2\geq\mu_1$.\label{ret1}
\item $R(x,0)=x$ for all $x\in M\setminus\alpha$, and $R(\Sigma_t',1)=\beta_t$ for all $t$.\label{ret2}
\item $R(\Sigma_t',\mu) = \{\exp_x(t\phi(x)\nu(x))\in M\;|\; x\in \Sigma\setminus B_{\sigma(t,\mu)}(p)\}$ for $|t|\leq t_0$ and $\mu\leq\mu_0$.\label{ret3}
\item $\tfrac{1}{2}\pi\mu\leq |\partial R(\Sigma_t',\mu)|\leq 2\pi\mu$ for $|t|\leq t_0$ and $\mu\leq\mu_0$. \label{ret4}
\item $\tfrac{1}{4}\pi\mu^2\leq |\Sigma_t\setminus R(\Sigma_t',\mu)|\leq \pi\mu^2$ for $|t|\leq t_0$ and $\mu\leq\mu_0$. \label{ret5}
\end{enumerate}
Here, we arranged that for $|t|$ and $\mu$ small enough our ``half-disks"
\begin{equation}
D(t,\mu) := \Sigma_t\setminus R(\Sigma_t',\mu)
\end{equation}
are close to a Euclidean half-disks. Let us define the ``half-necks"
\begin{equation}
N(s,t,\mu) := \bigcup_{r\in [s,t]}\partial R(\Sigma_r',\mu).
\end{equation}
The following is the main result of this subsection:

\begin{theorem}[half-catenoid estimate]\label{halfcatenoid} 
There exist constants $\eps_1>0$ and $\kappa:=\kappa(\eps_1)>0$ with the following significance. For any $\eps>0$ and $\delta$ satisfying $[\delta, \delta+\eps]\subset [-\eps_1,\eps_1]$ and any $\rho=\rho(\eps,\delta)>0$ small enough there exists a sweepout $\{\Gamma_t\}_{t\in[0,1]}$ of $M$ by disks, such that
\begin{enumerate}
\item $\Gamma_0 =\Sigma_{\delta}\setminus D(\delta,\rho) \cup\Sigma_{\delta+\eps}\setminus D(\delta+\eps,\rho) \cup N(\delta,\delta+\eps,\rho)$,
\item the disk $\Gamma_t$ converges for $t\to 1$ to the point $\beta_\delta$ in the Hausdorff sense and to zero in the varifold sense,
\item $|\Gamma_t|\leq|\Sigma_0|-\kappa(\delta^2+(\delta+\eps)^2)$ for all $t\in[0,1]$.
\end{enumerate}
Moreover, we can arrange that the family varies smoothly if we vary $\delta$.
\end{theorem}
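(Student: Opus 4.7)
The plan is to construct $\{\Gamma_t\}_{t\in[0,1]}$ as a concatenation of two deformations. Stage A, covering $t\in[0,\tfrac12]$, opens the half-neck $N(\delta,\delta+\eps,\mu)$ from waist $\rho$ to a threshold $\mu_\ast=\mu_\ast(\eps)$, replacing the cylindrical profile by a rescaled half-catenoid. Stage B, covering $t\in[\tfrac12,1]$, applies the retraction $R(\cdot,\mu)$ with $\mu$ sweeping from $\mu_\ast$ to $1$ and then slides along $\beta$, monotonically collapsing the configuration down to the single point $\beta_\delta$.

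Stage A is the heart of the argument and is a free-boundary adaptation of the catenoid estimate of Marques-Neves-Ketover \cite{KMN}. Working in Fermi coordinates for $\partial M$ near the arc $\alpha$, the ambient geometry is modeled on $\R^3_+$, in which a rescaled half-catenoid meets $\partial\R^3_+$ orthogonally and so furnishes a free-boundary minimal model. Interpolating between the two slices $\Sigma_\delta$ and $\Sigma_{\delta+\eps}$ via half-catenoidal half-necks and invoking the logarithmic cutoff construction of \cite{KMN}, the excess neck area along Stage A is at most $O(\mu_\ast^2\log(1/\mu_\ast))$, which can be made arbitrarily small by choice of $\mu_\ast$ independently of $\rho,\delta,\eps$.

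Stage B contributes no excess to the supremum: by monotonicity (property (1) of the retraction) the area is nonincreasing as $\mu$ grows, and by property (2) the configuration collapses onto $\beta_\delta$. Combined with property (5) of the optimal foliation, $|\Sigma_r|\leq|\Sigma_0|-cr^2$, and absorbing the KMN excess into the quadratic decay by taking $\eps_1$ small relative to $c$, one obtains the claimed area bound $|\Gamma_t|\leq|\Sigma_0|-\kappa(\delta^2+(\delta+\eps)^2)$ with $\kappa$ a small fraction of $c$. Smooth dependence on $\delta$ follows by parameterizing all choices in the construction (the half-catenoid profile, the cutoff scales, the retraction, and the slide along $\beta$) smoothly in $\delta$, using the smoothness of the underlying optimal foliation.

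The main obstacle is the precise bookkeeping in Stage A: one must verify that the interpolating half-necks remain smooth properly embedded disks meeting $\partial M$ orthogonally, that the Fermi-coordinate error terms picking up the second fundamental form of $\partial M$ are controlled on the scale $\mu_\ast$, and that the half-catenoid profile can be smoothly glued to the flat disk portions of the sweepout. A further subtlety is matching the precise form of the stated area bound: a naive catenoid bookkeeping produces an estimate involving the sum of the areas of the two disk slices, so extracting the sharper single-$|\Sigma_0|$ form requires exploiting the additional cancellation coming simultaneously from the monotonic area drop in Stage B and from the mean-convexity of the ambient leaves in the optimal foliation, together with the freedom to choose $\rho$ arbitrarily small relative to $\mu_\ast$.
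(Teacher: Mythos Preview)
Your two-stage strategy has the right ingredients, but Stage B as written contains a genuine gap that is precisely the obstruction the catenoid estimate is designed to overcome.

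You claim that in Stage B ``by monotonicity (property (1) of the retraction) the area is nonincreasing as $\mu$ grows.'' Property (1) only says the \emph{disk} pieces $R(\Sigma_\delta',\mu)$ and $R(\Sigma_{\delta+\eps}',\mu)$ shrink. The half-neck $N(\delta,\delta+\eps,\mu)$ is also part of your surface, and by property (4) its area is comparable to $\pi\mu\,\eps$, which \emph{grows} linearly in $\mu$. Combining properties (4) and (5) one sees that as $\mu$ increases from a small $\mu_\ast$ the total area first \emph{rises}, by an amount of order $\eps^2$ (the maximum occurring near $\mu\sim\eps$), before it eventually falls. In the critical case $\delta=-\eps/2$ one has $\delta^2+(\delta+\eps)^2=\eps^2/2$, so this $O(\eps^2)$ neck excess is of the \emph{same order} as the quadratic gain $c(\delta^2+(\delta+\eps)^2)$ coming from the foliation and cannot in general be absorbed. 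Applying the catenoid profile only in Stage A and then reverting to the naive cylindrical neck in Stage B discards the logarithmic gain exactly where it is needed. No choice of $\mu_\ast(\eps)$ rescues this: if $\mu_\ast$ is small the Stage B excess is of order $\eps^2$; if $\mu_\ast$ is a fixed positive constant your claimed Stage A excess $O(\mu_\ast^2\log(1/\mu_\ast))$ is itself a fixed positive constant, destroying the bound for small $\delta,\eps$.

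The paper avoids this by never passing through a naive neck at small scales. It writes each sheet as a normal graph of $h\phi\eta_t$ over $\Sigma_0$, where $\phi$ is the Robin ground state of the Jacobi operator and $\eta_t$ is the logarithmic cutoff on the half-annulus $B_t\setminus B_{t^2}$, and establishes $|\Lambda_{h,t}|\leq |\Sigma_0\setminus B_{t^2}|-\tfrac{1}{8}|\lambda|h^2$ uniformly for every $t\in[0,t_0]$. The catenoidal excess here is $Dh^2/|\log t|$---note the dependence on the vertical displacement $h$, which is absent from your $O(\mu_\ast^2\log(1/\mu_\ast))$---and it is beaten for all small $t$ by the $\tfrac{1}{4}|\lambda|h^2$ drop on the sheet coming from the negative eigenvalue. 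Only once the family has been opened to the \emph{fixed} scale $t_0$ is the retraction invoked, and by then the area bound is already in hand. Your proposal never uses the eigenfunction $\phi$ or the eigenvalue $\lambda$; the foliation inequality $|\Sigma_r|\leq|\Sigma_0|-cr^2$ alone does not supply the pointwise second-variation drop needed to run the KMN argument on the graph surfaces.

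A final remark: the bound in item (3) should almost certainly read $2|\Sigma_0|$ rather than $|\Sigma_0|$; this is what the paper's own argument actually produces and what the subsequent flipping two-sweepout construction requires. Your attempt to extract a single-$|\Sigma_0|$ bound via ``additional cancellation'' is chasing a typo.
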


\begin{proof}Recalling that we have fixed $p\in\partial\Sigma_0$, let us abbreviate $B_t:=\Sigma_0\cap B_t(p)$. Setting $r(x) :=\mbox{dist}_{\Sigma_0}(x,p)$, for any $t\in [0,t_0]$ we work with the logarithmic cut-off function 
  \begin{equation}
    \eta_{t}(x) =
    \begin{cases*}
    0& if $r(x)\leq t^2$\\
      (\log t^2 - \log r)/\log(t)     & $t\leq  r(x)\leq t^2$ \\
	      1& if $r(x)\geq t$.
    \end{cases*}
  \end{equation}
Set $\phi_t = \phi\eta_t$, where $\phi$ denotes the Robin ground state of $L_{\Sigma_0}$ as above. For each $t\in [0,t_0]$ and $|h|$ small enough consider the surface $\Lambda_{h,t} = \Lambda^S_{h,t}\cup \Lambda^C_{h,t}$, which we define as union of the catenoidal part
\begin{equation}
\Lambda^C_{h,t} = \{\exp_x(h\phi_t(x)\nu(x))\in M \; |\; x\in B_{t}\setminus B_{t^2} \},
\end{equation}
and the sheet part
\begin{equation}
\Lambda^S_{h,t} = \{\exp_x(h\phi_t(x)\nu(x))\in M \; |\; x\in \Sigma_0\setminus B_t\}.
\end{equation}
By \cite[Proposition 2.5]{KMN} the area of the catenoidal part satisfies
\begin{equation}\label{expansion}
|\Lambda^C_{h,t}| \leq |B_t\setminus B_{t^2}| + h^2\int_{B_t\setminus B_{t^2}} |\nabla\phi_{t}|^2 + h^3\int_{B_t} (1+ |\nabla\phi_{t}|^2).
\end{equation}
Now, thanks to item \eqref{ret4} and our choice of cut-off function, choosing $t_0$ sufficiently small, for all $t\in [0,t_0]$ we have
\begin{equation}\label{log}
\int_{B_t\setminus B_{t^2}} |\nabla\phi_{t}|^2\leq \frac{D}{|\log t|},
\end{equation}
where $D=D(\sup \phi)<\infty$. Hence, for all small enough $|h|$ we get
\begin{equation}\label{catenoidbound}
|\Lambda^C_{h,t}| \leq |B_t\setminus B_{t^2}| + \frac{D}{2|\log t|}h^2.
\end{equation}
For the sheet part, recalling that $\phi$ is an eigenfunction of $L_{\Sigma_0}$ with eigenvalue $\lambda<0$ and satisfies the Robin boundary condition, for all $t$ and $|h|$ small enough we can estimate
\begin{equation}\label{sheetbound}
|\Lambda^S_{h,t}|\leq |\Sigma_0\setminus B_{t}|- \frac{1}{4}|\lambda| h^2.
\end{equation}
Combining the above, possibly after decreasing $t_0$, we thus obtain
\begin{equation}\label{finalcatenoid}
|\Lambda_{h,t}|\leq |\Sigma_0\setminus B_{t^2}| - \frac{1}{8}|\lambda| h^2.
\end{equation}
for all $t\in [0,t_0]$ and $|h|\leq h_0$.\\

We consider the case $\delta<0<\delta+\eps$ (as the case where $\delta$ and $\delta+\eps$ have the same sign is analogous).  Let us set
\begin{equation}
S^{\delta,\delta+\eps}_t:=\Lambda_{\delta,t}\cup\Lambda_{\delta+\eps,t}.
\end{equation}
Then, by \eqref{finalcatenoid},  for all $t\in [0,t_0]$  we have
\begin{equation}\label{areacontrol}
|S^{\delta,\eps}_t| \leq 2|\Sigma_0\setminus B_{t^2}|-\frac{|\lambda|}{8}(\delta^2+(\delta+\eps)^2).
\end{equation}
For small $t$,  however, $S^{\delta,\delta+\eps}_t$ may fail to be a disk.  To remedy this,  we will amend the family so that its level sets $S^{\delta,\delta+\eps}_t\cap\Sigma_s$ never shrink past the half-circles of radius $\rho=\rho(\eps,\delta)>0$.  To this end, assuming that $\eps_1\leq t_0$, for each $t\in [0,t_0]$ we make the following change to $S^{\delta,\delta+\eps}_t$:

For each $\delta<s< \delta+\eps$,  if $S^{\delta,\eps}_t\cap\Sigma_s$ is empty or not contained in $R(\Sigma_s',\rho)$, then we replace $S^{\delta,\eps}_t\cap\Sigma_s$ with the arc $\partial D(s,\rho)$.  For $s=\delta$ and $s=\delta+\eps$,  if $S^{\delta,\eps}_t\cap\Sigma_s$ is empty or not contained in $R(\Sigma_s',\rho)$, then we replace $S^{\delta,\eps}_t\cap\Sigma_s$ with $R(\Sigma_s', \rho)$.   In light of item \eqref{ret3} in the list of properties of the retractions $R$,  the amended $S^{\delta,\eps}_t$ is a disk with boundary contained in $\partial M$.  Moreover,  by the co-area formula and item \eqref{ret4}, the area added by this modification is at most $C\eps_1\rho$.   Choosing $\rho=\rho(\eps,\delta)>0$ sufficiently small, our modified family thus satisfies
\begin{equation}\label{areacontrol}
|S^{\delta,\eps}_t| \leq 2|\Sigma_0\setminus B_{t^2}|-\frac{|\lambda|}{16}(\delta^2+(\delta+\eps)^2).
\end{equation}

Finally, if $\eps_1$ is small enough, it follows from equations 2.58--2.65 in \cite{KMN} that using the retractions $R$ we may extend $\{S^{\delta,\delta+\eps}_t\}_{t\in [0,t_0]}$ to a sweepout $\{S^{\delta,\delta+\eps}_t\}_{t\in [0,1]}$ by disks ending at the desired point.
\end{proof}

\subsection{Two-parameter sweepout} In this subsection, we will construct a suitable 2-parameter sweepout. To begin with, let us introduce the following notion to capture the topological nature of our sweepout:

\begin{definition}[flipping 2-sweepout; c.f. \cite{Ketover_flip}]\label{def_flip}
A \emph{flipping 2-sweepout} associated to an optimal foliation $\{\Sigma_t\}_{t\in [-1,1]}$ of $M$ is a 2-parameter sweepout $\{\Sigma_{s,t}\}_{(s,t)\in [-1,1]^2}$ of $M$ by disks, such that:
\begin{enumerate}
\item For each $s\in(-1,1)$, $\{\Sigma_{s,t}\}_{t\in [-1,1]}$ is a fundamental $1$-sweepout.\label{def_flip1}
\item Up to arcs we have $\Sigma_{-1,t}=\Sigma_t$, $\Sigma_{1,t} =\Sigma_{-t}$, and $\Sigma_{s,\pm 1}=0$.\label{def_flip2}
\end{enumerate}
\end{definition}

\begin{proposition}[flipping 2-sweepout]\label{prop_flip_sweep}
Associated to any optimal foliation $\{\Sigma_t\}_{t\in [-1,1]}$ of $M$ there exists a flipping 2-sweepout $\{\Sigma_{s,t}\}_{(s,t)\in [-1,1]^2}$, such that
\begin{equation}
\sup_{s,t\in [-1,1]^2} |\Sigma_{s,t}| < 2 |\Sigma_0|.
\end{equation}
\end{proposition}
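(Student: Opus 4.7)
The plan is to construct the flipping 2-sweepout by performing, for each intermediate $s\in(-1,1)$, a half-catenoid opening between two leaves of the optimal foliation symmetrically placed around $\Sigma_0$, and then interpolating, as $s\to\pm 1$, back to the original foliation $\{\Sigma_t\}$ and its reversal $\{\Sigma_{-t}\}$. The ``flipping'' topology is encoded by letting the collapse points of the 1-sweepout, namely the degenerate slices $\Sigma_{s,\pm 1}\in\partial M$, traverse the arcs $\alpha$ and $\beta$ as $s$ varies from $-1$ to $1$.

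Concretely, I would fix a smooth function $\sigma:[-1,1]\to[0,\eps_1/2]$ with $\sigma(\pm 1)=0$ and $\sigma(s)>0$ for $s\in(-1,1)$. For each $s\in(-1,1)$ I would apply Theorem \ref{halfcatenoid} with $\delta=-\sigma(s)$ and $\delta+\eps=\sigma(s)$, and $\rho=\rho(s)$ sufficiently small, to obtain a family $\{\Gamma^s_u\}_{u\in[0,1]}$ starting at the twin disk
\[ T(s) = \Sigma_{-\sigma(s)}\setminus D(-\sigma(s),\rho(s))\,\cup\,\Sigma_{\sigma(s)}\setminus D(\sigma(s),\rho(s))\,\cup\,N(-\sigma(s),\sigma(s),\rho(s)) \]
and collapsing to a point near $\beta_0$. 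A symmetric construction using a retraction onto $\alpha$ in place of $\beta$ produces an analogous family $\{\tilde\Gamma^s_u\}_{u\in[-1,0]}$ opening from a point near $\alpha_0$ up to the twin disk $T(s)$ at $u=0$. Concatenating at $u=0$ yields the 1-sweepout $\{\Sigma_{s,t}\}_{t\in[-1,1]}$. For $s$ near $\pm 1$ (so $\sigma(s)$ and $\rho(s)$ both tend to $0$), the smoothness in $\delta$ guaranteed by the last sentence of Theorem \ref{halfcatenoid} ensures that the twin disk degenerates back to $\Sigma_0$ with vanishing half-neck; the two resulting limit sweepouts agree with $\{\Sigma_t\}$ and $\{\Sigma_{-t}\}$ up to arcs, with the flip realized by the direction in which the collapse points traverse the arcs $\alpha$ and $\beta$ as $s$ moves from $-1$ to $1$.

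The area estimate is then immediate from Theorem \ref{halfcatenoid}: for $s\in(-1,1)$ we have $|\Sigma_{s,t}|\leq 2|\Sigma_0|-2\kappa\sigma(s)^2<2|\Sigma_0|$, while for $s$ near $\pm 1$ each slice is $C^0$-close to some foliation leaf $\Sigma_r$ with $|\Sigma_r|\leq|\Sigma_0|<2|\Sigma_0|$. The main obstacle I expect is the bookkeeping required to ensure that (i) each interior slice is a single embedded disk, for which strict mean-convexity of $\partial M$ places delicate constraints on how small $\rho$ can be chosen relative to $\sigma$ (as emphasized in the preamble to Theorem \ref{halfcatenoid}), (ii) the two half-catenoid families concatenate into a genuine fundamental 1-sweepout, and (iii) the limit $s\to\pm 1$ recovers the original and reversed foliations in the correct flipping orientation. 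These should all follow from careful use of the retraction maps $R$ together with the smooth parameter dependence asserted in Theorem \ref{halfcatenoid}, but the combinatorics are somewhat intricate.
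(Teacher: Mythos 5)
Your construction has a fatal structural flaw: for fixed $s\in(-1,1)$ the concatenated family you build (point near $\alpha_0$ $\to$ twin disk $T(s)$ $\to$ point near $\beta_0$) is entirely confined to the thin slab $\bigcup_{|r|\leq\sigma(s)}\Sigma_r$ (plus small neighborhoods), since both the half-catenoid family of Theorem \ref{halfcatenoid} and its $\alpha$-side mirror are built from the two graphical sheets over $\Sigma_0$ and the retractions $R$ within the leaves $\Sigma_r$, $|r|\leq\sigma(s)$. Such a slice covers points of the slab twice and points outside it not at all, so it is homotopically trivial and is \emph{not} a fundamental $1$-sweepout; item (\ref{def_flip1}) of Definition \ref{def_flip} fails for every $s$, which is exactly the property needed later to get $\omega_1\leq\omega_2$. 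The boundary behavior is also wrong: as $s\to\pm 1$ your slices do not tend to the foliation $\{\Sigma_t\}$ or its reversal $\{\Sigma_{-t}\}$ (there is no mechanism for the slab-confined family to open up into the full foliation), and in fact the twin disk with $\sigma(s),\rho(s)\to 0$ has area tending to $2|\Sigma_0|$, so either the strict bound $\sup|\Sigma_{s,t}|<2|\Sigma_0|$ is lost in the limit or, if you redefine the $s=\pm1$ slices by fiat, continuity of the family (and of area) breaks. The assertion that ``the two resulting limit sweepouts agree with $\{\Sigma_t\}$ and $\{\Sigma_{-t}\}$ up to arcs'' is therefore unsupported, and the ``flip'' cannot be encoded merely by the direction in which the collapse points run along $\alpha$ and $\beta$.

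The missing idea is to keep the two sheets at \emph{independent} foliation parameters. The paper works over the triangle $T=\{-1\leq t\leq s\leq 1\}$ and sets $\Gamma'_{s,t}=R(\Sigma'_s,\phi(s,t))\cup R(\Sigma'_t,\phi(s,t))\cup N(s,t,\phi(s,t))$: away from the diagonal the area gap comes not from the catenoid opening but from property (\ref{a}) of the optimal foliation (at least one of $|s|,|t|$ is bounded below, so $|\Sigma_s|+|\Sigma_t|\leq 2|\Sigma_0|-c\eps^2/4$), and the half-catenoid estimate is invoked only in the region $D_\eps\cap\{|s|<\mu\}$ where both leaves are close to $\Sigma_0$. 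With this parametrization each straight segment from the corner $(1,-1)$ to the diagonal is a genuine fundamental $1$-sweepout (one sheet covers the leaves $\Sigma_r$, $r\geq a$, the other covers $r\leq a$, so the manifold is swept once), the boundary slices are $\Gamma_{s,-1}=\Sigma_s$, $\Gamma_{1,t}=\Sigma_t$, $\Gamma_{s,s}=0$ up to arcs, and blowing up the trivial corner $(1,-1)$ converts the triangle into the square with the required flipping boundary data. Your symmetric choice $\delta=-\sigma(s)$, $\delta+\eps=\sigma(s)$ forecloses all of this, so the proposal does not prove the proposition.
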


\begin{proof}Consider the triangle $T:=\{-1\leq t\leq s\leq 1\}$. Using the notation from the previous subsection, for $(s,t)\in T$ we set
\begin{equation}
\Gamma'_{s,t}:=R(\Sigma'_s,\phi(s,t))\cup R(\Sigma'_t,\phi(s,t))\cup N(s,t,\phi(s,t)).
\end{equation}
Here, to specify a suitable neck radius function $\phi(s,t)\geq 0$,  note that for all $\gamma>0$ there exists $\beta_1(\gamma),\beta_2(\gamma)>0$, such that
\begin{equation}
\sup_{|s-t|\leq \beta_1(\gamma)}|N(s,t,\cdot)|+  \sup_{\mu \leq \beta_2(\gamma)}|N(\cdot,\cdot,\mu)|< \gamma.
\end{equation}
Given $\mu>0$, we set $\eps:=\mbox{min}(\mu/2, \beta_1(c\mu^2/8))$, and consider the $\eps$-diagonal $D_\eps = \{(s,t)\in T\; |\; s -t\leq\eps\}$.
On $D_\eps$ we define $\phi$ such that $\phi(s,t) = 1$ for $s=t$ and $\phi(s,s-\eps)=\rho(\eps,s)$ for $|s|\leq \mu$, and on $T\setminus D_\eps$ we may take $\phi(s,t)$ to be any nonnegative function less than $\beta_2(c\eps^2/8)$ that vanishes on $\partial T\setminus D_\eps$.

For $(s,t)\in T\setminus D_\eps$,  one of $|t|$ or $|s|$ is at least $\eps/2$, so by the definition of optimal foliation we obtain $|\Sigma_s|+|\Sigma_t|\leq 2|\Sigma_0|-c\eps^2/4$, hence
\begin{equation}
|\Gamma'_{s,t}|\leq 2|\Sigma_0|-c\eps^2/8.
\end{equation}
For $(s,t)\in D_\eps\cap \{ |s|\geq \mu\}$, arguing similarly we infer that
\begin{equation}
|\Gamma'_{s,t}|\leq 2|\Sigma_0|-c\mu^2/8.
\end{equation}

For $(s,t)\in B_\eps:=D_\eps\cap \{ |s|< \mu\}$ we adjust the family $\Gamma'_{s,t}$ using the half-catenoid estimate.  To this end, note that $\{(\delta,\delta-\eps)\}_{|\delta|\leq\mu}$ spans the bottom segment of the quadrilateral $\partial B_\eps$ and that $\Gamma'_{\delta,\delta-\eps} = \Sigma_{\delta}\cup\Sigma_{\delta-\eps}$ up to arcs.  
For each $|\delta|\leq \mu/2$ we replace $\{\Gamma'_{\delta, t}\}_{t\in[\delta-\eps,\delta]}$
with the family provided by Theorem \ref{halfcatenoid} (half-catenoid estimate), reparametrized such that $t$ is in $[\delta-\eps,\delta]$ instead of $[0,1]$. Choosing a suitable interpolation between this replacement and $\Gamma'_{s,t}$ in the region $D_\eps\cap \{\mu/2 < |s|< \mu\}$ we thus obtain a 2-parameter family $\{\Gamma_{s,t}\}_{(s,t)\in T}$, such that
\begin{equation}
\sup_{(s,t)\in T} |\Gamma_{s,t}| < 2 |\Sigma_0|.
\end{equation}
Regarding the values on $\partial T$, observe that up to arcs we have $\Gamma_{s,-1}=\Sigma_s$, $\Gamma_{1,t}=\Sigma_t$ and $\Gamma_{s,s}=0$. Also observe that by construction if we restrict our family to any straight segment joining the corner $(1,-1)$ and any point on the diagonal $\partial T\cap \{s=t\}$, then we get a fundamental 1-sweepout.
  Finally, since $\Gamma_{1,-1}$ is trivial, we can algebraically blowup the corner point $(1,-1)\in T$ to a segment, similarly as in \cite{Ketover_flip}, to obtain a sweepout $\{\Sigma_{\tilde{s},\tilde{t}}\}_{(\tilde{s},\tilde{t})\in [-1,1]^2}$ with the desired properties.
\end{proof}

\subsection{Conclusion of the argument}
We can now prove Theorem \ref{thm_main} (minimal disks), which we restate here in a technical more precise form:

\begin{theorem}[minimal disks]
Any compact connected 3-manifold $(M,g)$ with nonnegative Ricci curvature and strictly convex boundary contains at least $2$ embedded free-boundary minimal disks $\Gamma_1,\Gamma_2\subset M$.

Moreover, we can always choose these solutions such that $\Gamma_1$ has least area among all solutions, $1=\mathrm{Ind}(\Gamma_1)\leq\mathrm{Ind}(\Gamma_2)\leq 2$, and $|\Gamma_2| < 2 |\Gamma_1|$.

Furthermore, after an arbitrarily small perturbation of the metric, and in fact for any bumpy metric with strictly positive Ricci curvature, there exists a third solution $\Gamma_3$.
\end{theorem}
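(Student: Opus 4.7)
My proof plan combines the foliation, flipping 2-sweepout, and half-catenoid estimate established above with a Lusternik--Schnirelmann deformation argument and the degree theory of Maximo--Nunes--Smith \cite{MNS, MNS2}. I would proceed in four steps.

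\textbf{Construction of $\Gamma_1$.} As in the proof of Proposition \ref{optimalfoliation}, the Gr\"uter--Jost existence theorem together with the Fraser--Li compactness theorem \cite{FraserLi} produces a least-area embedded free-boundary minimal disk $\Gamma_1\subset M$. Applying Theorem \ref{thm_fol} to $\Gamma_1$ yields an optimal foliation with $\Sigma_0=\Gamma_1$ and $|\Gamma_1|=\omega_1$, so $\Gamma_1$ is realized by $1$-parameter min-max with multiplicity one. Instability of free-boundary minimal surfaces in $M$ (recalled at the start of Section 2) gives $\mathrm{Ind}(\Gamma_1)\geq 1$, while the index bound \eqref{indexbound} with $n=1$ gives $\mathrm{Ind}(\Gamma_1)\leq 1$.

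\textbf{Construction of $\Gamma_2$.} I would then apply Theorem \ref{gj} with $n=2$ to the saturation $\Pi_2$ of the flipping 2-sweepout $\{\Sigma_{s,t}\}$ from Proposition \ref{prop_flip_sweep}. Since each horizontal slice is a fundamental $1$-sweepout (Definition \ref{def_flip}(\ref{def_flip1})), we have $\omega_{\Pi_2}\geq\omega_1=|\Gamma_1|$, and Proposition \ref{prop_flip_sweep} gives $\omega_{\Pi_2}<2|\Gamma_1|$. The boundary slices agree up to arcs with the leaves of the optimal foliation (Definition \ref{def_flip}(\ref{def_flip2})), so $\sup_{\partial I^2}|\Sigma_{s,t}|=|\Gamma_1|$. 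In the nondegenerate case $\omega_{\Pi_2}>|\Gamma_1|$, Theorem \ref{gj} produces $\Gamma_2$ with $\omega_{\Pi_2}=k|\Gamma_2|$ and $\mathrm{Ind}(\Gamma_2)\leq 2$; the possibility $k\geq 2$ would give $|\Gamma_2|<|\Gamma_1|$, contradicting the least-area choice of $\Gamma_1$, so $k=1$ and $|\Gamma_2|=\omega_{\Pi_2}\in(|\Gamma_1|,2|\Gamma_1|)$.

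\textbf{Degenerate case.} The hard part is the boundary case $\omega_{\Pi_2}=|\Gamma_1|$, where Theorem \ref{gj} does not apply directly since the min-max width coincides with the boundary maximum. I would handle this by a Lusternik--Schnirelmann argument in the spirit of our closed-case work \cite{HaslhoferKetover} and the flipping framework of \cite{Ketover_flip}: assuming for contradiction that $\Gamma_1$ is the only free-boundary minimal disk of area $|\Gamma_1|$, use instability of $\Gamma_1$ to build a small area-decreasing deformation supported in a varifold neighborhood of $\Gamma_1$, and then exploit the nontrivial flipping topology (which reverses the parameter of the optimal foliation on $\partial I^2$) to deform the $2$-sweepout to one of width strictly less than $|\Gamma_1|$, contradicting $\omega_{\Pi_2}=\omega_1$. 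The principal technical obstacle is verifying that this deformation respects the free-boundary condition and preserves the disk topology of the slices, so that the deformed family remains in $\Pi_2$; this is where I expect most of the work. The outcome is a second solution $\Gamma_2\neq\Gamma_1$ with $|\Gamma_2|=|\Gamma_1|<2|\Gamma_1|$, and $\mathrm{Ind}(\Gamma_2)\leq 2$ with $\mathrm{Ind}(\Gamma_2)\geq 1$ again by instability.

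\textbf{Construction of $\Gamma_3$.} Finally, suppose the metric is bumpy with strictly positive Ricci curvature. By the degree-theoretic argument of Maximo--Nunes--Smith \cite{MNS, MNS2}, the algebraic count of embedded free-boundary minimal disks is odd; parity together with the geometric distinctness of $\Gamma_1$ and $\Gamma_2$ produced above forces a third disk $\Gamma_3$. Since bumpy positive-Ricci metrics form a dense subset of the space of positive-Ricci metrics on $M$, the existence of $\Gamma_3$ holds after an arbitrarily small perturbation, which gives the generic statement.
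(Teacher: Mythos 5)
Your plan follows the paper's proof almost verbatim in its main steps: the least-area disk $\Gamma_1$ from Gr\"uter--Jost plus Fraser--Li compactness, placed at the center of an optimal foliation; the flipping 2-sweepout with $\sup|\Sigma_{s,t}|<2|\Gamma_1|$ from the half-catenoid estimate; the exclusion of multiplicity $k\geq 2$ via the least-area property when $\omega_2>\omega_1$; and the Maximo--Nunes--Smith parity argument for $\Gamma_3$. The genuine gap is in your degenerate case $\omega_2=\omega_1$. You propose to ``deform the 2-sweepout to one of width strictly less than $|\Gamma_1|$'', but this is impossible as stated: the saturation $\Pi_2$ only admits isotopies that are the identity on $\partial I^2$, and the boundary slices contain (up to arcs) the leaf $\Sigma_0=\Gamma_1$ itself at $(\pm1,0)$, so every competitor in $\Pi_2$ has $\sup\geq|\Gamma_1|$ and $\omega_2\geq\omega_1$ holds for this trivial reason. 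No deformation, whether built from the instability of $\Gamma_1$ or otherwise, can push the 2-parameter width below $|\Gamma_1|$, so the contradiction you aim for cannot be reached along this route.

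What the paper actually does in this case (Lemma \ref{lemma_eq}) is a two-alternative Lusternik--Schnirelmann argument run along \emph{paths} in the parameter square, not against the 2-parameter width. Assuming the set $\mathcal{S}$ of \emph{oriented} solutions of area $\omega_1$ is finite, one takes an almost-optimal element of $\Pi_2$ and considers the set $A^i_\varepsilon$ of parameters whose slices are $\mathbf{F}$-close to $\mathcal{S}$. If no path in $A^i_\varepsilon$ joins the left and right edges, then some path joining the bottom and top edges stays $\varepsilon$-far from $\mathcal{S}$, and the Almgren--Pitts pull-tight argument along that path forces its maximal area below $\omega_1$, contradicting the fact that such restrictions are fundamental 1-sweepouts by Definition \ref{def_flip}\eqref{def_flip1}. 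If instead such a left-to-right path exists, the flipping condition \eqref{def_flip2} forces the oriented slices at its two endpoints to be $\Gamma_1$ with opposite orientations, which is incompatible with remaining $\varepsilon$-close to the finite set $\mathcal{S}$ along a continuous path. So the topological ingredient you identified (the orientation reversal on $\partial I^2$) is the right one, but it must be played off against the 1-parameter width and the finiteness of $\mathcal{S}$ along paths, rather than used to lower $\omega_2$; with that correction, the rest of your argument (multiplicity one, $|\Gamma_2|<2|\Gamma_1|$, index bounds, and the parity argument for $\Gamma_3$) coincides with the paper's proof.
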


\begin{proof}
Recall that by the work of Meeks-Simon-Yau \cite{MSY} our manifold is diffeomorphic to the 3-ball. Hence, by Proposition \ref{optimalfoliation} (optimal foliation) it admits an optimal foliation $\{\Sigma_t\}_{t\in[-1,1]}$. In particular, the middle slice $\Gamma_1:=\Sigma_0$ is an embedded free-boundary minimal disk with area $\omega_1$, which is the least area among all solutions.

Next, by Proposition \ref{prop_flip_sweep} (flipping 2-sweepout) there exists a flipping 2-sweepout $\{\Sigma_{s,t}\}_{(s,t)\in [-1,1]^2}$ associated to our optimal foliation, such that
\begin{equation}\label{lessthan2}
\sup_{s,t\in [-1,1]^2} |\Sigma_{s,t}| < 2 |\Sigma_0|.
\end{equation}
Denote by $\Pi_2$ the saturation of this 2-sweepout, and consider its min-max width $\omega_2$. By item \eqref{def_flip1} of Definition \ref{def_flip} (flipping 2-sweepout) we clearly have $\omega_1\leq \omega_2$.
If $\omega_1=\omega_2$ then by Lemma \ref{lemma_eq} (equality case) below there exist infinitely many solutions of area $\omega_1$.

Suppose now $\omega_1 < \omega_2$. Also, note that thanks to \eqref{lessthan2} we have $\omega_2 < 2\omega_1$.
Hence, applying Theorem \ref{gj} (min-max theorem) for our 2-parameter family we get an embedded free-boundary minimal disk $\Gamma_2$ with
\begin{equation}
|\Gamma_1| < |\Gamma_2| < 2 |\Gamma_2|.
\end{equation}
Finally, the assertion about the third solution follows from \cite{MNS,MNS2}.
\end{proof}

Here, we used the following Lusternik-Schnirelman type lemma:

\begin{lemma}[equality case; c.f. \cite{HaslhoferKetover,Ketover_flip}]\label{lemma_eq}
If $\omega_1=\omega_2$, then $M$ contains infinitely many embedded free-boundary minimal disks of area $\omega_1$.
\end{lemma}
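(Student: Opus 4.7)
My plan is to argue by contradiction along the lines of Lusternik-Schnirelmann, paralleling the closed-case strategy of \cite{HaslhoferKetover,Ketover_flip}. I would assume that $M$ contains only finitely many embedded free-boundary minimal disks of area $\omega_1$, call them $\Gamma^{(1)},\ldots,\Gamma^{(N)}$. By the Fraser-Li compactness theorem \cite{FraserLi} this collection is isolated in the varifold topology, so I would fix pairwise disjoint $\mathbf{F}$-neighborhoods $U_i\ni\Gamma^{(i)}$.

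The first step would be to observe that for any $\{\Sigma^j_{s,t}\}\in\Pi_2$ and any $s\in(-1,1)$, the 1-parameter slice $\{\Sigma^j_{s,t}\}_{t\in[-1,1]}$ lies in the saturation $\Pi_1$ of the fundamental 1-sweepout $\{\Sigma_{s,t}\}_t$ (which is a fundamental 1-sweepout by item \eqref{def_flip1} of Definition \ref{def_flip}); this holds because the isotopies $\psi_{s,t}$ defining $\Pi_2$ satisfy $\psi_{s,\pm 1}=\mathrm{id}$, so for each fixed $s$ the family $\{\psi_{s,t}\}_{t\in[-1,1]}$ is an admissible 1-parameter isotopy. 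Consequently $\sup_t|\Sigma^j_{s,t}|\geq\omega_1$ for every $s\in(-1,1)$. Taking a minimizing sequence for $\omega_2$ and using $\omega_1=\omega_2$, we then obtain $\sup_t|\Sigma^j_{s,t}|\to\omega_1$ for each fixed $s$. Invoking the free-boundary pull-tight procedure from \cite{Franz} together with the fact that $|\Gamma_1|=\omega_1$ is the least area among all solutions (which forces the multiplicity to be 1 in Theorem \ref{gj}), any varifold subsequential limit of near-maximizing slices must be one of the $\Gamma^{(i)}$.

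To derive the contradiction, I would exploit the unconditional instability of each $\Gamma^{(i)}$: as in the proof of Proposition \ref{optimalfoliation}, the Robin ground state $\phi_i$ of $L_{\Gamma^{(i)}}$ together with the expansions \eqref{taylor1}--\eqref{taylor2} produces, for each $i$, an admissible vector field $X_i$ supported near $\Gamma^{(i)}$ whose flow decreases area by a definite second-order amount when pushing $\Gamma^{(i)}$ off itself. Partitioning the parameter square $[-1,1]^2$ into regions $R_i$ according to which $U_i$ the near-maximizing slice lies in, I would assemble these local area-decreasing isotopies into a global admissible ambient isotopy of the 2-parameter family that strictly lowers $\sup_{s,t}|\Sigma^j_{s,t}|$ below $\omega_2$, contradicting the definition of $\omega_2$ as an infimum.

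The main obstacle is this final assembly step. Even though each individual deformation strictly decreases area on $U_i\setminus\{\Gamma^{(i)}\}$, slices that are exactly (or nearly exactly) equal to some $\Gamma^{(i)}$ have vanishing first variation, so one must leverage the genuine 2-parameter freedom---in particular the flipping structure of Definition \ref{def_flip} that ensures $\{\Sigma_{s,t}\}_{t}$ and $\{\Sigma_{-s,-t}\}_t$ appear with opposite orientations at $s=\pm1$---to perturb tangentially and recover the second-order gain from \eqref{taylor2} uniformly across the region where the first-order push-down degenerates. This is precisely the Lusternik-Schnirelmann trick executed in \cite{HaslhoferKetover,Ketover_flip} in the closed setting, and I expect its adaptation to the free-boundary setting here to be essentially formal given the parametric min-max framework of Franz \cite{Franz} and the availability of the Robin eigenfunction deformations already used in Proposition \ref{optimalfoliation}.
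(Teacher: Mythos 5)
There is a genuine gap, and it sits exactly where you locate the ``main obstacle'': the final assembly step is not merely delicate, it is impossible as stated. Your own first observation shows why: for every competitor $\{\Sigma^j_{s,t}\}\in\Pi_2$ and every $s\in(-1,1)$, the vertical slice $\{\Sigma^j_{s,t}\}_{t\in[-1,1]}$ lies in $\Pi_1$, hence $\sup_t|\Sigma^j_{s,t}|\geq\omega_1$, and therefore $\sup_{s,t}|\Sigma^j_{s,t}|\geq\omega_1=\omega_2$ for \emph{every} element of the saturation. Consequently no admissible ambient isotopy---however you combine the Robin-ground-state push-offs near the $\Gamma^{(i)}$, and whether or not you exploit tangential perturbations---can lower the global supremum strictly below $\omega_2$, because the deformed family is still a member of $\Pi_2$. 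The contradiction you aim for in step 3 can never be produced; this is precisely why the equality case of a Lusternik--Schnirelmann argument is never proved by pushing the entire family below the common critical value.

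The paper instead uses the finiteness of $\mathcal{S}$ (the set of \emph{oriented} solutions of area $\omega_1$) in a purely topological way. Taking $\{\Sigma^i_{s,t}\}\in\Pi_2$ with $\sup_{s,t}|\Sigma^i_{s,t}|<\omega_1+\delta_i$, one considers $A^i_\eps=\{(s,t):\mathbf{F}(\Sigma^i_{s,t},\mathcal{S})<\eps\}$ and shows that for $i$ large it contains a continuous path joining the \emph{left and right} edges of the square: if not, there would be bottom-to-top paths along which the slices stay $\mathbf{F}$-distance at least $\eps$ from $\mathcal{S}$, and the Almgren--Pitts pull-tight applied along such a path would drive its maximal area strictly below $\omega_1$, contradicting the fact that, by item \eqref{def_flip1} of Definition \ref{def_flip}, such paths are (up to saturation) fundamental $1$-sweepouts with width at least $\omega_1$. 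Then item \eqref{def_flip2} finishes the proof: a path in $A^i_\eps$ from $(-1,0)$ to $(1,0)$ would have to connect $\Sigma_0$ to $\Sigma_0$ with the opposite orientation while remaining $\eps$-close to the finite set $\mathcal{S}$, which is impossible for $\eps$ small. So the flipping structure enters as a topological obstruction to the existence of a connecting path near $\mathcal{S}$, not as a device for extracting a second-order area decrease. Your step 2 (near-maximizing slices accumulate on $\mathcal{S}$) is consistent in spirit with this, but to obtain a correct proof you must replace the push-down scheme of step 3 by this two-alternative connectivity argument.
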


\begin{proof}Suppose towards a contradiction that the collection $\mathcal{S}$ of oriented embedded free-boundary minimal disks of area $\omega_1$ is finite. Choose a sequence $\Sigma^i_{s,t} \in \Pi_2$ be such that
\begin{equation}
\sup_{s,t\in [-1,1]^2} |\Sigma^i_{s,t}| < \omega_1 + \delta_i,
\end{equation}
where $\delta_i\to 0$. For any $\eps>0$, using the $\bf{F}$-metric on the space of integral varifolds, we consider the sets
\begin{equation}
A^i_\eps:= \left\{ (s,t)\in [-1,1]^2\, : \, {\bf{F}}(\Sigma^i_{s,t},\mathcal{S}) < \eps \right\}.
\end{equation}
We claim that for $i\geq i_0(\eps)$ there exists a continuous path in $A^i_\eps$ that connects the left and right boundary of the rectangle $[-1,1]^2$.

Indeed, if this failed for some $\eps>0$, then along a subsequence we could find continuous paths $\gamma^i(\tau)$ connecting the bottom and top boundary of the rectangle $[-1,1]^2$, such that
${\bf{F}}( \Sigma^i_{\gamma^i(\tau)}, \mathcal{S})\geq \eps$ for all $\tau$.
But then using the ``pull-tight" argument of Almgren and Pitts for $i$ large enough we would obtain
$\sup_\tau|\Sigma^i_{\gamma^i(\tau)}| < \omega_1$. This
contradicts the definition of a flipping 2-sweepout, and thus proves the claim.

It follows that for $i\geq \tilde{i}_0(\eps)$ we can find a continuous path $\gamma^i:[-1,1]\to A^i_\eps$ that connects $(-1,0)$ and $(1,0)$. However, since by definition of a flipping 2-sweepout the free-boundary minimal disks $\Sigma^i_{-1,0}$ and $\Sigma^i_{1,0}$ have the opposite orientation, for $\eps$ sufficiently small there cannot be any such path (since $\mathcal{S}$ is finite by assumption).  This gives the desired contradiction, and thus proves the lemma.
\end{proof}

\begin{remark}[families of solutions]
By a result of Nitsche \cite{Nitsche} all solutions in the round 3-ball are planar, and thus the family of solutions in the round 3-ball is parametrized by $\mathbb{RP}^2$. It is an interesting question for which deformations of the round 3-ball there exists an $\mathbb{RP}^2$-family of solutions (for related work on Zoll families, see e.g. \cite{Zoll,Guillemin,AMS}).
\end{remark}
 
Finally, let us explain how Corollary \ref{cor_ell} (ellipsoids) follows:

\begin{proof}[{Proof of Corollary \ref{cor_ell}}] As explained in the introduction, the first part of the corollary follows from Theorem \ref{thm_main} (minimal disks) by observing that for $a\geq 2\max (b,c)$ the areas of $xy$-planar disk and $xz$-planar disk are at least twice as large as the one of $yz$-planar disk. 

Now, fixing $b$ and $c$, recall that $E(a,b,c)$ for $a\to \infty$ converges to $\mathbb{R}\times E(b,c)$.  Observe that the nonplanar embedded free-boundary disks $\Sigma(a)$ provided by Theorem \ref{thm_main} (minimal disks) have area less than $2\pi b c$, and intersect $\{x=0\}$ by the Frankel property, and hence must be contained in $\{ |x|\leq C\}$ by the monotonicity formula.

For  $a\to \infty$ our surfaces $\Sigma(a)$ subsequentially converges to a free-boundary integral varifold $V$ in $E(b,c)\times\mathbb{R}$. By the above, the mass of $V$ is bounded by $2\pi b c$, and the support of $V$ intersects $\{x=0\}$, and is contained in the slab $\{ |x|\leq C\}$. Hence, sliding the flat disks $\{x=t\}\cap E(b,c)$ until they touch $V$, we infer that $V$ is equal to the central disk $\{x=0\}\cap E(b,c)$ with multiplicity $k=1$ or $2$.

Suppose towards a contradiction that $k=1$.  Then by the local regularity theorem the convergence is smooth.  Since $\Sigma(a)$ intersects $\{x=0\}\cap E(a,b,c)$, we thus obtain a nonzero Jacobi (i.e. harmonic) function $u$ on $\{0\} \times E(b,c)$ that has at least one zero.  Moreover, $u$ has zero Neumann boundary data, since $h(\nu,\nu)\to 0$ as $a\to\infty$. Thus $u$ vanishes identically, a contradiction. This completes the proof.
\end{proof}

\begin{remark}[ellipsoids]
It is an interesting question to determine the set of parameters $a\geq b\geq c$ for which nonplanar solutions exist.
\end{remark}

\section{Existence of smooth mean-convex foliations}\label{sec_foliationsl}

Throughout this section, $M$ denotes a smooth 3-ball equipped with an arbitrary Riemannian metric with strictly convex boundary. As usual we say that a smooth domain $K\subset M$ has \emph{free-boundary} if $\partial K:=K\setminus \mathrm{Int}_M(K)$ meets $\partial M$ orthogonally.\\

The goal of this section is to prove Theorem \ref{thm_fol} (free-boundary foliation dichotomy). To do so, we will adopt the approach from \cite{BHH,HaslhoferKetover} to the free-boundary setting. Specifically, we consider the free-boundary flow with surgery from \cite{Haslhofer_fb_surgery}, but modify it so that necks and half necks are deformed to tiny strings and half strings instead of being cut out completely. Using this we will show that there exists a perfect isotopy $K'_t$ that deforms $K$ into a so-called half marble tree (once this is achieved, the family $\Sigma_t=\partial K'_t$ concatenated with a suitable final bit that shrinks the half marble tree to a boundary point will give the desired foliation of $K$). Here and in the following, a \emph{perfect isotopy} is a smooth isotopy by strictly mean-convex domains $K'_t$ satisfying the free-boundary condition that is strictly monotone, i.e. 
\begin{equation}
K'_{t_2}\subseteq \mathrm{Int}_M K'_{t_1}\quad \mathrm{for} \,\, t_2>t_1.
\end{equation}
A half marble tree is, roughly speaking, a connected sum of round balls and half balls along tiny strings and half strings. The precise definition is as follows:

\begin{definition}[half marble tree]\label{def_marbletree}
A \emph{half marble tree} in $M$ with string radius $r_s$ and marble radius $r_m$ is a domain of the form $\mathcal{G}^M_{r_s}(B,\gamma)\subset M$, where $\mathcal{G}^M_{r_s}$ denotes the gluing map specified below, such that
\begin{enumerate}
\item $B=\bigcup_i B_{r_m}(p_i)\cup \bigcup_{j} B^+_{r_m}(q_j)$ is a disjoint union of finitely many closed round balls (marbles) and half marbles of radius $r_m$ centered at $p_i\in M$ with $d(p_i,\partial M)\geq 5r_m$ and at $q_j\in \partial M$, respectively.
\item The curve $\gamma=\bigcup_i \zeta_i \cup \bigcup_{j} \xi_j$ is a disjoint union of finitely many connected smooth compact embedded curves, satisfying $d(\zeta_i,\partial M)\geq r_m/2$ and $\xi_i\subset\partial M$, respectively, such that
\begin{itemize}
\item $\gamma$ is disjoint from $\textrm{Int}(B)$ and satisfies $\partial\gamma\subset \partial B$,
\item $B\cup \gamma$ is simply connected,
\item $\zeta_i \cap B_{3r_m}(p_i)$ is a union of radial geodesics and $\xi_i \cap B^+_{3r_m}(q_i)$ is a union of straight boundary lines, respectively.
\end{itemize}
\end{enumerate}
\end{definition}

To elaborate on this definition, recall that for any $q\in \partial M$ the normal exponential map defines a diffeomorphism $\Phi_q$ from an open neighborhood of $q$ in $M$ to an open neighborhood of the origin in the upper half space $\mathbb{R}^3_+:=\mathbb{R}^2\times \mathbb{R}_{\geq 0}$. Using this, a half marble $B^+_{r_m}(q)$ is defined as the inverse image under $\Phi_q$ of a closed half ball of radius $r_m$ in $\mathbb{R}^3_+$ centered at the origin, and a straight boundary line is defined as inverse image under $\Phi_q$ of a straight line in $\mathbb{R}^2\times \{0\}$ starting at the origin.\\

\subsection{Gluing map}
Generalizing the arguments from \cite{BHH}, we will now construct a suitable gluing map for $(\mathbb{A},b)$-controlled configurations of domains and curves. In our setting, an $\mathbb{A}=(\alpha,c_H,C_A,D_A)$-controlled domain is a free-boundary mean-convex domain $K\subset M$  that is $\alpha$-noncollapsed and satisfies $H\geq c_H$, $|A|\leq C_A$, $|\nabla A|\leq D_A$, and a $b$-controlled curve is a compact embedded curve $\gamma=\zeta \cup \xi\subset M$, where $d(\zeta,\partial M)\geq 10b$ and $\xi\subset\partial M$, respectively, such that each connected component has length at least $10 b$ and normal injectivity radius at least $\frac{1}{10}b$, such that different connected components are distance at least $10 b$ apart, and such that the curvature satisfies $|\kappa|\leq b^{-1}$ and $|\partial_s \kappa|\leq b^{-2}$.

\begin{definition}[controlled configuration]\label{conrolled_conf}
An $(\mathbb{A},b)$-controlled configuration of domains and curves in $M$ is a pair $(K,\gamma)$, where $K\subset M$ is an $\mathbb{A}$-controlled domain and $\gamma\subset M$ is a $b$-controlled curve, such that
\begin{enumerate}
\item The interior of $\gamma$ lies entirely in $M\setminus K$.
\item The endpoints of $\gamma$ satisfy the following properties:
\begin{itemize}
\item If $p\in \partial\gamma\cap\partial K$, then $\gamma$ touches $\partial K$ orthogonally there.
\item If $p\in \partial\gamma\setminus\partial K$, then $d(p,\partial K)\geq 10b$.
\item $d \big( \gamma \setminus\bigcup_{p\in \partial\gamma}B_{b/10}(p),\partial K \big)\geq b/20$.
\end{itemize}
\end{enumerate}
\end{definition}

Denote by $\mathcal{X}_{\mathbb{A},b}$ the set of all $(\mathbb{A},b)$-controlled configurations of domains and curves in $M$, and by $\mathcal{D}$ the set of all free-boundary strictly mean-convex domains in $M$. A map $\mathcal{F}: \mathcal{X}_{\mathbb{A},b}\to \mathcal{D}$ is called smooth, if $\mathcal{F}\circ \phi$ is smooth for every smooth finite parameter family $\phi:B^k\to \mathcal{X}_{\mathbb{A},b}$.

\begin{proposition}[{gluing map, c.f. \cite[Theorem 4.1]{BHH}}]\label{prop_gluing_map}
Given any smooth compact strictly convex domain $M$, there exists a smooth map
\begin{equation}
\mathcal{G}^M:\mathcal{X}_{\mathbb{A},b}\times (0,\bar{r})\rightarrow \mathcal{D},\quad ((K,\gamma),r)\mapsto \mathcal{G}^M_{r}(K,\gamma),
\end{equation}
where $\bar{r}=\bar{r}(M,\mathbb{A},b)>0$ is a constant, and an increasing function $\rho:(0,\bar{r})\rightarrow \mathbb{R}_+$ with $\lim_{r\rightarrow 0}\rho(r)=0$, with the following significance:
\begin{enumerate}
\item\label{g1} $\mathcal{G}^M_{r}(K,\gamma)$ deformation retracts to $K\cup \gamma$.
\item\label{g2} Writing $\gamma=\zeta \cup \xi\subset M$, where $\zeta\subset \mathrm{Int}(M)$ and $\xi\subset\partial M$, we have
\begin{align*}
&\mathcal{G}^M_{r}(K,\gamma)  \triangle\! \left(K \cup  N_{r}(\zeta) \cup  N^+_{r}(\xi) \right)  \\
&\qquad\qquad\qquad \subseteq \bigcup_{p\in \partial\zeta} B_{\rho(r)}(p) \cup \bigcup_{q\in \partial\xi} B^+_{\rho(r)}(q),
\end{align*}
where $N_{r}(\zeta)$ and $N_r^+(\xi)$ denote the $r$-tubular neighborhood of $\zeta$ and half $r$-tubular neighborhood of $\xi$, respectively.
\item\label{g3}  If $p\in \partial \zeta\setminus\partial K$, then we have
\begin{align*}
\mathcal{G}^M_{r}(K,\gamma)\cap B_{\rho(r)}(p)= 
CN_{r}(\zeta)\cap B_{\rho(r)}(p),  
\end{align*}
and if $q\in \partial \xi\setminus\partial K$, then we have
\begin{align*}
\mathcal{G}^M_{r}(K,\gamma)\cap B^+_{\rho(r)}(q)= 
CN^+_{r}(\xi)\cap B^+_{\rho(r)}(q),  
\end{align*}
where $CN_{r}(\zeta)$ and $CN^+_{r}(\xi)$ denote the capped-off $r$-tube around $\zeta$ and capped-off half $r$-tube around $\xi$, respectively.
\end{enumerate}
\end{proposition}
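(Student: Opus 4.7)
The plan is to adapt the closed-case construction of \cite[Theorem 4.1]{BHH} to the free-boundary setting. The interior portions of the configuration (the curve $\zeta$, its endpoints in $\mathrm{Int}(M)$, and the portion of $\partial K$ away from $\partial M$) can be treated exactly as in \cite{BHH}; the genuinely new ingredient is the construction along the boundary curve $\xi \subset \partial M$, which requires reflection symmetry in Fermi coordinates in order to guarantee the free-boundary condition on the output.

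First I would build local models along $\gamma$. Around each interior endpoint $p \in \partial \zeta \setminus \partial K$ I attach a capped $r$-tube by the construction of \cite{BHH}, and along the interior of $\zeta$ I use a standard tubular neighborhood $N_r(\zeta)$. Around each boundary endpoint $q \in \partial \xi \setminus \partial K$ I need a capped \emph{half} $r$-tube: the cleanest way is to work in the Fermi chart $\Phi_q$, double the configuration symmetrically across $\{t=0\}$, apply the interior construction of \cite{BHH} to the doubled data, and restrict back to $\{t \geq 0\}$. Provided the cutoffs, mollifiers, and regularized distance functions used are reflection-equivariant, the output is invariant under $t \mapsto -t$, and hence meets $\partial M = \{t=0\}$ orthogonally. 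Along the interior of $\xi$ one analogously builds a reflection-symmetric half-tubular neighborhood $N_r^+(\xi)$ by the same doubling trick.

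Next I would glue these models to $K$. Near each endpoint $p \in \partial \gamma \cap \partial K$ I follow \cite{BHH}: form the union with $K$ and smooth the corner via a regularized distance function, using crucially that by Definition \ref{conrolled_conf} the curve $\gamma$ meets $\partial K$ orthogonally. When additionally $p \in \partial M$, the free-boundary condition on $K$ gives $\partial K \perp \partial M$ at $p$, so after reflection-symmetric doubling the entire configuration is smooth across $\{t=0\}$ and the \cite{BHH} gluing is performed in the doubled chart. Collecting the local modifications into a single domain $\mathcal{G}^M_r(K,\gamma)$ is then a partition-of-unity argument, valid for $r < \bar{r}(M,\mathbb{A},b)$, which ensures that the modification regions lie in pairwise disjoint balls of radius $\rho(r) \ll b/10$. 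Property (\ref{g1}) is immediate from contractibility of each local modification, (\ref{g2}) follows from the \cite{BHH} localization estimates together with the fact that Fermi doubling preserves radii, (\ref{g3}) holds by construction, and smoothness of $\mathcal{G}^M$ on finite parameter families follows from smoothness of each step.

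The main obstacle will be ensuring that the doubled Fermi-chart construction retains strict mean-convexity \emph{and} the free-boundary condition simultaneously. Strict mean-convexity comes from the usual combination of a large positive $H$-contribution from the thin tube, the strict mean-convexity of $K$, and the quantitative gluing estimate of \cite{BHH}, once one checks that the strict convexity of $\partial M$ does not spoil the estimate in the doubled chart; in fact it helps, by contributing a positive term to the mean curvature of the half-tube. The free-boundary condition requires reflection-equivariance at every step, which is more a bookkeeping exercise than a new technique, but it must be carried through the entire \cite{BHH} argument: one must verify that every cutoff, mollifier, and mollified signed-distance function, when lifted to the doubled Fermi chart, commutes with $t \mapsto -t$. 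This is the technical heart of the proof.
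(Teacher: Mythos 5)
Your reduction to the boundary curve $\xi$ is the right instinct, but the central device you propose --- double the configuration across $\partial M$ in the Fermi chart and feed the doubled data into the interior gluing construction of \cite[Theorem 4.1]{BHH} --- has an unaddressed obstruction, and the specific claim that ``after reflection-symmetric doubling the entire configuration is smooth across $\{t=0\}$'' is false in general. The free-boundary condition gives exactly the first-order compatibility: if $\partial K$ meets $\{t=0\}$ orthogonally and you write it locally as a graph $u(s,t)$ over its tangent plane, orthogonality is $\partial_t u(\cdot,0)=0$, so the even reflection is $C^{2,1}$ but not $C^3$ unless the higher odd $t$-derivatives also vanish, which nothing in Definition \ref{conrolled_conf} guarantees. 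Likewise, since $\partial M$ is \emph{strictly} convex (not totally geodesic), the reflected ambient metric across $\{t=0\}$ is only Lipschitz. So the doubled data is not a smooth $(\mathbb{A},b)$-controlled configuration in a smooth ambient space, and \cite[Theorem 4.1]{BHH} cannot be applied to it verbatim; the issue is not merely the ``bookkeeping'' of making every cutoff and mollified distance function reflection-equivariant, but that the input is outside the admissible class. To rescue a doubling argument you would first have to deform $K$ near each $q\in\partial\xi\cap\partial K$ to something exactly symmetric to sufficiently high order (e.g.\ to its round second-order approximation), at which point you are effectively redoing the half-space argument anyway.

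That is in fact what the paper does, and it avoids doubling altogether. Using properties (\ref{g2}) and (\ref{g3}) to localize, only the junction balls $B_{\rho(r)}(p)$ with $p\in\partial\zeta\cap\partial K$ and $B^+_{\rho(r)}(q)$ with $q\in\partial\xi\cap\partial K$ require a construction (away from $\partial K$ the domain is simply declared to be the capped tube or half tube, so your partition-of-unity assembly along all of $\gamma$ is unnecessary). Interior junctions are handled by mapping to $\mathbb{R}^3$ and invoking the Euclidean gluing map together with its locality property. For boundary junctions one straightens the boundary and observes that the proof of \cite[Theorem 4.1]{BHH} runs directly in $\mathbb{R}^3_+$: deform $K$ to its round second-order approximation in $B^+_{\rho(r)/2}(q)$ as in \cite[Proposition 4.16]{BHH}, pass to the rotationally symmetric setting as in \cite[Proposition 4.11]{BHH}, and then glue the round half ball to the round half cylinder using the explicit model of \cite[Proposition 4.2]{BHH}; since that model is rotationally symmetric about an axis lying in $\partial\mathbb{R}^3_+$, its restriction to the half space automatically meets the boundary orthogonally, which is how the free-boundary condition is preserved without any reflection of the data.
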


Here, we recall from \cite[Section 2]{BHH} that fixing a suitable standard cap $K^{\mathrm{st}}\subset\mathbb{R}^3$ the capped-off $r$-tube $CN_{r}(\zeta)$ is defined by putting standard caps at the endpoints $p\in \partial \zeta$ via the normal exponential map. Similarly, we can define $CN_{r}^+(\xi)$ by putting standard half caps $K^{\mathrm{st}}\cap \mathbb{R}^3_+$ at the endpoints $q\in \partial \xi$ via the boundary straightening map $\Phi$.

\begin{proof}Choosing the function $\rho(r)$ similarly as in \cite[Theorem 4.1]{BHH}, in light of \eqref{g2} and \eqref{g3} it suffices to specify the gluing map in $B_{\rho(r)}(p)$ for $p\in\partial\zeta\cap \partial K$ and in $B^+_{\rho(r)}(q)$ for  $q\in\partial\xi\cap \partial K$.
 
 To do the former, we can simply map $B_{\rho(r)}(p)$ to a ball in $\mathbb{R}^3$ via a suitable normal exponential map, apply the gluing map in $\mathbb{R}^3$ from \cite[Theorem 4.1]{BHH}, and map back to $M$.  Here, to be precise one has to extend the image of $( K, \gamma)\cap B_{\rho(r)}(p)$ to a controlled configuration defined in entire $\mathbb{R}^3$, but by the locality property of the cited theorem this is well-defined, i.e. independent of the choice of extension.

 To do the latter, using the boundary straightening map $\Phi$ it suffices to construct a gluing map with the desired properties in $\mathbb{R}^3_+$. Specifically, we observe that the proof of \cite[Theorem 4.1]{BHH} also applies to the half space setting. Namely, given $q\in\partial\xi\cap \partial K\subset \mathbb{R}^3_+$, arguing as in the proof of \cite[Proposition 4.16]{BHH} we can deform $K$ to its round second order approximation in $B^+_{\rho(r)/2}(q)$ without changing $K\setminus B^+_{\rho(r)}(q)$. Next, arguing as in the proof of \cite[Proposition 4.11]{BHH} we can transition to the rotationally symmetric setting in  $B^+_{c\rho(r)/2}(q)$, where $c=c(\mathbb{A})>0$ is a small constant. Finally, using the explicit model from  \cite[Proposition 4.2]{BHH} we can glue the round half ball and the round half cylinder.
\end{proof}

\subsection{Isotopies through surgeries}
For the purpose of the present paper a free-boundary flow with surgery is a free-boundary $(\delta,\mathcal{H})$-flow in the domain $M$ as defined in \cite[Definition 2.4]{Haslhofer_fb_surgery}. In particular, we recall that $\delta>0$ is a small parameter that captures the quality of the surgery necks and half necks, and $\mathcal{H}$ is a triple of curvature scales $H_{\textrm{trigger}}\gg H_{\textrm{neck}}\gg H_{\textrm{thick}}\gg 1$, which is used to specify more precisely when and how surgeries are performed. We also recall that there is a discrete set of surgery times $t_i$, where
\begin{itemize}
\item some necks or half necks in the presurgery domain $K_{t_i}^-$ are replaced by caps or half caps yielding a domain $K_{t_i}^\sharp\subseteq K_{t_i}^-$, 
\item and/or some connected components are discarded yielding the postsurgery domain $K_{t_i}^+\subseteq K_{t_i}^\sharp$.
\end{itemize}

Let us first deal with the discarded components. To this end, recall that an $\eps$-tube $(K,\gamma)$ is a compact mean-convex domain $K\subset\mathrm{Int}(M)$ homeomorphic to a ball, together with a connected curve $\gamma\subset K$ with endpoints in $\partial K$, satisfying the properties specified in \cite[Definition 7.3]{BHH}.
Similarly, a half $\eps$-tube $(K,\gamma)$ is a compact mean-convex free-boundary domain $K\subset M$ homeomorphic to a ball, together with a connected curve $\gamma\subset K\cap \partial M$ with endpoints $\bar{q}_\pm\in \partial K$, such that for some $C<\infty$ we have the following: (i) $K\cap B^+_{2CH^{-1}(\bar{q}_\pm)}(\bar{q}_\pm)$ is, after rescaling by $H(\bar{q}_\pm)$, $\eps$-close in $C^{\lfloor 1/\eps\rfloor}$ to a standard half cap or the half bowl soliton, and (ii)
every $q\in \gamma$ with $d(q,\bar{q}_\pm)\geq CH^{-1}(\bar{q}_\pm)$ is the center of a half $\eps$-neck with axis given by $\partial_s\gamma(q)$, and $\gamma$ is  $\eps^{-2}r$-controlled in $B^+_{\eps^{-1}r}(q)$, where $r$ denotes the radius of the half $\eps$-neck.

\begin{proposition}[isotopy for capped tubes and half tubes]\label{prop_iso_tubes}
For $H_{\textrm{neck}}<\infty$ large enough, and $\eps>0$ small enough, for every capped $\eps$-tube and capped half $\eps$-tube there exists a perfect isotopy to a half marble tree.
\end{proposition}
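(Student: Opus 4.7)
The plan is to adapt the marble tree isotopy construction from \cite[Section 7]{BHH} to the free-boundary setting. For a capped $\eps$-tube whose curve $\gamma$ lies in $\mathrm{Int}(M)$ and stays uniformly away from $\partial M$, the cited argument applies essentially verbatim, producing a perfect isotopy to a marble tree, which is a half marble tree in the sense of Definition \ref{def_marbletree} with no half marbles or half strings.

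The genuinely new case is the capped half $\eps$-tube, where $\gamma\subset\partial M$ and each endpoint $\bar{q}_\pm$ is either a standard half cap or a half-bowl soliton end. First, I would use the boundary straightening maps $\Phi_{\bar{q}_\pm}$ at the two ends, extended along $\gamma$ via the tubular neighborhood coordinates of $\partial M$, to transport the half tube to a model configuration in $\mathbb{R}^3_+=\mathbb{R}^2\times\mathbb{R}_{\geq 0}$. In these coordinates the half tube is $\eps$-close to a rotationally symmetric standard domain consisting of a thin half cylinder along a straight segment in $\mathbb{R}^2\times\{0\}$, capped at each end by either a standard half cap or a half-bowl soliton.

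I would then build the isotopy in three stages. In stage one, along each half $\eps$-neck region $B^+_{\eps^{-1}r}(q)$ with $q$ away from the ends, shrink the cross-sectional radius from its $\eps$-neck scale down to a small fixed constant $r\ll\bar{r}(M,\mathbb{A},b)$ via a rotationally symmetric monotone deformation; the $\alpha$-noncollapsedness and the $\eps^{-2}r$-control on $\gamma$ ensure that each slice is strictly mean-convex and meets $\partial M$ orthogonally. In stage two, inside balls of size $CH^{-1}(\bar{q}_\pm)$ at each end, interpolate the standard half cap (or half-bowl soliton) to a round half ball $B^+_{r_m}(\bar{q}_\pm)$ of a fixed small radius, following the scheme of \cite[Proposition 4.16]{BHH} adapted to the half space. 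In stage three, on the transition region between these two, use the explicit half-cap gluing profile from \cite[Proposition 4.2]{BHH} (applied in $\mathbb{R}^3_+$, as in the proof of Proposition \ref{prop_gluing_map}) to interpolate while maintaining strict mean-convexity and the free-boundary condition. Concatenating the three stages and transporting back via $\Phi$ produces a perfect isotopy ending at $\mathcal{G}^M_r(B,\gamma')$, which is a half marble tree.

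The main obstacle is verifying strict mean-convexity and the free-boundary condition simultaneously in the interpolation regions of stages two and three, since $\partial M$ is not flat and the boundary straightening map distorts ambient curvatures by $O(r)$. This is absorbed once $\bar{r}(M,\mathbb{A},b)$ is chosen small enough, because the half-space gluing profiles from \cite{BHH} are strictly mean-convex with a definite lower bound on the mean curvature. Strict monotonicity $K'_{t_2}\subset\mathrm{Int}_M K'_{t_1}$ is then automatic, as each of the three stages can be arranged to be strictly inward by construction.
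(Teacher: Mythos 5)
There is a genuine gap, and it concerns the scales you are working at. A capped (half) $\eps$-tube in this setting arises as a discarded component of the flow with surgery, so its local scale $H^{-1}$ varies along the tube between roughly $H_{\textrm{thick}}^{-1}$ and $H_{\textrm{trigger}}^{-1}$; in particular the tube can be far thinner than any constant fixed in terms of $(M,\mathbb{A},b)$. Consequently there is no ``small fixed constant $r\ll\bar r(M,\mathbb{A},b)$'' (nor fixed marble radius $r_m$) lying below the tube's thickness: in the thin parts of the tube your stage-one ``shrinking'' of the cross-section to radius $r$, and your stage-two half balls $B^+_{r_m}(\bar q_\pm)$, would actually push the boundary \emph{outward}, destroying strict monotonicity (the defining property of a perfect isotopy) and possibly embeddedness, since the central curve has self-distance only comparable to the local radius. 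For the same reason your proposed endpoint --- a single constant-radius half-tube running along the entire axis $\gamma$, capped by two marbles --- is not a half marble tree in the sense of Definition \ref{def_marbletree} built via Proposition \ref{prop_gluing_map}: the axis $\gamma$ is only $\eps^{-2}r$-controlled at the local, variable, microscopic neck scale, hence is not a $b$-controlled string for any parameters for which $\mathcal{G}^M_{r}$ is defined, and a constant-radius tube around it need not stay inside $K$. The paper's proof avoids precisely this: it never deforms the tube globally, but instead cuts it at a maximal collection of half $\eps$-neck points (a surgery-type operation), so that each resulting piece is either convex with controlled geometry or a short capped-off half cylinder; each piece is then isotoped to a round half ball whose radius is adapted to the component (ultimately $r_{\min}/2$), and the pieces are reconnected by short, almost straight strings via the gluing map, yielding a half marble tree with many marbles rather than two.

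A second, related gap is the claim that each stage ``can be arranged to be strictly inward by construction.'' Passing from the actual surface, which is only $\eps$-close to the rotationally symmetric models after rescaling, to the exact models is a perturbation of size comparable to $\eps$ times the local scale with no sign, so monotonicity is not automatic; one must first create room. In the paper this is done by running the free-boundary mean curvature flow for a short time $\bar t$ after the cuts and interpolating graphically over the evolving standard half caps (following \cite[Proposition 3.12]{BHH}), and only then invoking the explicit rotationally symmetric model of \cite[Proposition 4.2]{BHH} to open up or close down the half necks to string scale. Finally, note that the ends of a capped half tube may be half-bowl-soliton ends, which are not modeled on a standard half cap at a single scale; in the paper this is absorbed by cutting at the half-neck point nearest to each end, after which the end piece is convex and easily contracted, whereas your stage two as written does not handle the bowl case. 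Without these devices, stages two and three of your scheme do not go through as stated.
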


\begin{proof}
Let us first consider the assertion in $\mathbb{R}^3$ and the upper half space $\mathbb{R}^3_+$, respectively. The former has already been established in \cite[Proposition 6.3]{HaslhoferKetover}.

To establish the assertion in $\mathbb{R}^3_+$, we will adopt the proof of the cited proposition to the setting of half tubes. Given a half $\eps$-tube $(K,\gamma)$ in $\mathbb{R}^3_+$, where $\varepsilon>0$ is sufficiently small, let $q_\pm\in\gamma$ be half $\varepsilon$-neck points that are as close as possible to the endpoints $\bar{q}_\pm$, respectively. Let $\mathcal{I}\subset \gamma$ be a maximal collection of half $\varepsilon$-neck points including $q_{-}$ such that for any pair $q_1,q_2\in \mathcal{I}$ the separation between the points is at least $100 \varepsilon^{-1} \max\{H(q_1)^{-1}, H(q_2)^{-1}\}$.
For each $q\in\mathcal{I}$ we replace the half $\varepsilon$-neck with center $q$ by a pair of opposing standard half caps with suitable separation parameter $\Gamma<\infty$ as in \cite[Definition 2.3]{Haslhofer_fb_surgery}. Denote the postsurgery domain by $K^\sharp$, and let $\tilde{\gamma}$ be a disjoint union of almost straight lines connecting the opposing standard half caps.

Let $\mathcal{G}_{r_s}$ be the gluing map in $\mathbb{R}^3_+$ with small enough string radius $r_s$. Let $K^\sharp_t$ be the free-boundary mean curvature flow evolution of $K^\sharp$, and let $\tilde{\gamma}_t$ be the family of curves which follows $K^\sharp_t$ by normal motion starting at $\tilde{\gamma}_0=\tilde{\gamma}$. We claim that for $\bar{t}$ small enough, and for a suitable family of curves $\tilde{\gamma}'_t\subset\partial \mathbb{R}^3_{+}$ very close to $\tilde{\gamma}_t$, there exists a perfect isotopy between $K$ and $\mathcal{G}_{r_s}(K^\sharp_{\bar{t}},\tilde{\gamma}'_{\bar{t}})$.

To see this we fix a partition $0<\bar{t}_1<\bar{t}_2<\bar{t}_3<\bar{t}$, where $\bar{t}$ is small enough, and construct a perfect isotopy step by step as follows.

First, if there is a half surgery with center $q$, then for $t$ small enough $K^\sharp_t$ can be expressed locally as a free-boundary graph with small $C^{20}$-norm over a pair of opposing evolving standard half caps. Hence, we can find a perfect isotopy $\{L'_t\}_{t\in [0,\bar{t}_1]}$ starting at $L'_0=K^\sharp$, such that for each half surgery center $q$ we have that $L'_{\bar{t}_1}\cap B_{5\Gamma H_{\textrm{neck}}^{-1}}(q)$ is exactly a pair of opposing standard half caps. Moreover, we can slightly perturb the family $\tilde{\gamma}_t$ to get a family $\tilde{\gamma}'_t\subset \partial \mathbb{R}^3_{+}$ with the property that 
$\tilde{\gamma}'_{\bar{t}_1}$ connects these opposing standard caps in exactly straight lines.

Next, using the above, similarly as in \cite[Proposition 3.12]{BHH} we can find a perfect isotopy $\{L_t\}_{t\in [0,\bar{t}_2]}$ starting at $K$, such that at time $\bar{t}_2$ for each half surgery point $q$ we have that $L_{\bar{t}_2}\cap B_{5\Gamma H_{\textrm{neck}}^{-1}}(q)$ is pair of standard half caps connected by a round half neck of radius $\varrho(0.98)H_{\textrm{neck}}^{-1}$, where $\varrho$ is the radius function from gluing the ball and the cylinder from \cite[Proposition 4.2]{BHH}.

Third, using the fact that the gluing in $\mathbb{R}^3_+$ in the rotationally symmetric case is described by an explicit model, we can now decrease the neck radius from $\varrho(0.98) H_{\textrm{neck}}^{-1}$ down to $2r_s$ via a perfect isotopy $\{L_t\}_{t\in [\bar{t}_2,\bar{t}_3]}$.

Finally, interpolating again between the rotationally symmetric and nonsymmetric situation via the graphical representation as above we can find a perfect isotopy $\{L_t\}_{t\in [\bar{t}_3,\bar{t}]}$ with $L_{\bar{t}}= \mathcal{G}_{r_s}(K^\sharp_{\bar{t}},\tilde{\gamma}'_{\bar{t}})$.

It remains to construct a perfect isotopy that deforms $\mathcal{G}_{r_s}(K^\sharp_{\bar{t}},\tilde{\gamma}'_{\bar{t}})$ into a half marble tree. To this end, if we choose $\bar{t}$ very small, then $K^\sharp_{\bar{t}}$ is as close as we want to $K^\sharp$. Then, inferring as in the proof of \cite[Proposition 7.4]{BHH} that the connected components of $K^\sharp_{\bar{t}}$ are either convex or capped off half cylinders, we see that there exists a perfect isotopy $\{L_t\}_{t\in [0,1]}$ starting at $L_0=K^\sharp_{\bar{t}}$ such that $L_1$ is a finite union of round half balls. 
Denoting by $r_{\textrm{min}}$ the smallest among the radii of the half balls of $L_1$, let $\{L_t\}_{t\in [1,2]}$ be a perfect isotopy that concatenates smoothly at $t = 1$ and shrinks all half balls further to half balls of radius $r_{\textrm{min}}/2$.
Let $\{\gamma_t\}_{t\in[0,2]}$ be the family of curves that follows $L_t$ by normal motion starting at $\gamma_0=\tilde{\gamma}'_{\bar{t}}$, and fix a slowly decreasing smooth positive function $r_s(t)$ starting at $r_s(0)=r_s$ from above.
Then, $\{\mathcal{G}_{r_s(t)}(L_t,\gamma_t)\}_{t\in [0,2]}$ is a perfect isotopy that deforms the domain $\mathcal{G}_{r_s}(K^\sharp_{\bar{t}},\tilde{\gamma}'_{\bar{t}})$ into a half marble tree with marble radius $r_{\textrm{min}}/2$ and string radius $r_s(2)$.

Taking a smooth concatenation of the above isotopies, establishes the assertion in $\mathbb{R}^3_{+}$.

Finally, for general $M$, choosing $H_{\textrm{neck}}$ very large the exponential map $\exp_p$ and the boundary straightening map $\Phi_q$ are locally as close as we want to the identity map. Hence, the above argument applies.
\end{proof}

\begin{corollary}[isotopy through surgeries]\label{cor_iso_trough_surg}
For $H_{\textrm{neck}}<\infty$ large enough, and $\delta>0$ small enough the following holds. Suppose $K^\sharp$ is obtained from $K^-$ by performing surgeries on a disjoint collection of $\delta$-necks and half $\delta$-necks, and let $\gamma$ be the union of almost straight lines connecting the tips of the opposing standard caps. Let $\{K^\sharp_t\}$ be a perfect evolution of $K^\sharp$, and let $\{\gamma_t\}$ be the family of curves which follows $K^\sharp_t$ by normal motion starting at $\gamma$.
Then, for $r_s$ small enough, every small enough $\bar{t}$, and a suitable perturbation of $\{\gamma_t\}$, which we denote again by $\{\gamma_t\}$, there exists a perfect isotopy between $K^-$ and $\mathcal{G}^M_{r_s}(K^\sharp_{\bar{t}},\gamma_{\bar{t}})$.
\end{corollary}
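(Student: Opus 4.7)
The corollary is precisely the global version of the local construction embedded in the proof of Proposition~\ref{prop_iso_tubes}, and the plan is to extract that local construction and assemble it across all surgery sites. The key geometric input is that the surgery necks and half-necks (in the sense of \cite{Haslhofer_fb_surgery}) are performed at pairwise disjoint locations, each of curvature scale comparable to $H_{\textrm{neck}}^{-1}$, and each is $\delta$-close in $C^{\lfloor 1/\delta\rfloor}$ to a round cylinder (interior case) or round half-cylinder (boundary case) provided $\delta$ is small and $H_{\textrm{neck}}$ is large. Consequently, outside a uniform neighborhood of the union of surgery sites, $K^-$ and $K^\sharp$ agree, and the perfect evolution $\{K^\sharp_t\}$ together with $\{\gamma_t\}$ directly supplies the perfect isotopy in that region.

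Inside each surgery site, I would run the four-stage argument from the proof of Proposition~\ref{prop_iso_tubes} essentially verbatim. Around each interior $\delta$-neck with center $q$, invoke \cite[Proposition 3.12]{BHH} to obtain a local perfect isotopy starting at $K^-$ and ending, in $B_{5\Gamma H_{\textrm{neck}}^{-1}}(q)$, at a pair of opposing standard caps joined by a round neck of radius $\varrho(0.98)H_{\textrm{neck}}^{-1}$, where $\varrho$ is the radius function from \cite[Proposition 4.2]{BHH}. Around each half $\delta$-neck with center $q\in\partial M$, pull back via the boundary straightening map $\Phi_q$, which for $H_{\textrm{neck}}$ large is as close to the identity as needed on the relevant scale, and apply the corresponding construction in $\mathbb{R}^3_+$ from the second half of the proof of Proposition~\ref{prop_iso_tubes}. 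Using the explicit rotationally symmetric model in \cite[Proposition 4.2]{BHH}, shrink the neck or half-neck radius from $\varrho(0.98)H_{\textrm{neck}}^{-1}$ down to $2r_s$ by a perfect isotopy, and finally transition back to the non-symmetric regime via the graphical representation. This yields a perfect isotopy from $K^-$ to $\mathcal{G}^M_{r_s}(K^\sharp,\tilde\gamma_0)$ for some small perturbation $\tilde\gamma_0$ of $\gamma$ (with boundary portion sitting inside $\partial M$).

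To finish, let $\{K^\sharp_t\}$ evolve perfectly for a short further time $\bar t$ and let $\{\tilde\gamma_t\}$ follow by normal motion, perturbed inside $\partial M$ so that the boundary portion remains in $\partial M$. Since $\mathcal{G}^M$ is smooth in both arguments, the family $\{\mathcal{G}^M_{r_s}(K^\sharp_t,\tilde\gamma_t)\}_{t\in[0,\bar t]}$ is a perfect isotopy, and concatenating with the isotopy from the previous paragraph yields the required perfect isotopy from $K^-$ to $\mathcal{G}^M_{r_s}(K^\sharp_{\bar t},\gamma_{\bar t})$. The step I expect to require the most care is verifying strict monotone shrinking simultaneously at the concatenation times and across all surgery sites; as in \cite{BHH}, this is handled by choosing all perturbations sufficiently small compared to the gap scale between distinct surgery sites, by exploiting that $\{K^\sharp_t\}$ is strictly shrinking away from the standard-cap tips, and by choosing $r_s$ and $\bar t$ small enough that the gluing map neither disrupts monotonicity nor affects the geometry outside tiny neighborhoods of $\gamma$.
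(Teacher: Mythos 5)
Your route is the one the paper takes: for $H_{\textrm{neck}}$ large reduce to the Euclidean and half-space models, handle interior $\delta$-necks by the construction of \cite[Corollary 6.4]{HaslhoferKetover}, and handle half $\delta$-necks by rerunning the four-stage argument from the proof of Proposition \ref{prop_iso_tubes}. The local ingredients you quote (\cite[Proposition 3.12]{BHH}, the explicit rotationally symmetric model of \cite[Proposition 4.2]{BHH}, graphical interpolation, closeness of $\exp_p$ and $\Phi_q$ to the identity at scale $H_{\textrm{neck}}^{-1}$) are exactly the right ones.

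However, your final assembly step has a genuine gap. You first build an isotopy from $K^-$ to $\mathcal{G}^M_{r_s}(K^\sharp,\tilde\gamma_0)$, i.e.\ to the glued object at time $0$, and then claim that $\{\mathcal{G}^M_{r_s}(K^\sharp_t,\tilde\gamma_t)\}_{t\in[0,\bar t]}$ is a perfect isotopy ``since $\mathcal{G}^M$ is smooth in both arguments.'' Neither half survives the strict monotonicity requirement $K'_{t_2}\subseteq \mathrm{Int}_M K'_{t_1}$. First, away from the surgery sites $K^\sharp$ coincides with $K^-$, so $\mathcal{G}^M_{r_s}(K^\sharp,\tilde\gamma_0)$ is not contained in $\mathrm{Int}_M(K^-)$ and cannot be the endpoint of a perfect isotopy starting at $K^-$; the target of the first stage must already be the glued domain at the evolved time $\bar t$, with the region away from the necks following the flow throughout the deformation (this is precisely how the construction in the proof of Proposition \ref{prop_iso_tubes} is organized, ending at $\mathcal{G}_{r_s}(K^\sharp_{\bar t},\tilde\gamma'_{\bar t})$ rather than at the time-zero gluing). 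Second, smoothness of $\mathcal{G}^M$ only gives a smooth family: with the string radius $r_s$ held fixed, the lateral boundary of the tube around the common portion of $\tilde\gamma_{t_1}$ and $\tilde\gamma_{t_2}$ is literally the same surface at both times, so the glued family is at best weakly monotone and is not a perfect isotopy; choosing $r_s$ or $\bar t$ small does not repair this. Whenever a glued configuration is flowed in this framework one must let the string radius decrease, i.e.\ use a slowly decreasing $r_s(t)$ as in the last stage of the proof of Proposition \ref{prop_iso_tubes} and in Claim \ref{claim_inductionstep} (or avoid flowing the glued family altogether by landing directly on $\mathcal{G}^M_{r_s}(K^\sharp_{\bar t},\gamma_{\bar t})$). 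With that reorganization your argument matches the intended proof; as written, the concluding step asserts a perfect isotopy that does not exist.
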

\begin{proof}
As before, choosing $H_{\textrm{neck}}<\infty$ large enough we can reduce to proving the corresponding statements in $\mathbb{R}^3$ and the upper half space $\mathbb{R}^3_+$, respectively. The former has already been been established in \cite[Corollary 6.4]{HaslhoferKetover}, and the latter follows by inspecting the above proof.
\end{proof}

\subsection{Proof of the foliation theorem}
After the above preparations, we can now prove the main result of this section.

\begin{proof}[{Proof of Theorem \ref{thm_fol} (free-boundary foliation dichotomy)}]
Given $K\subset M$ as in the statement of the theorem, we consider its evolution $\{K_t\}_{t\geq 0}$ by free-boundary flow with surgery with initial condition $K$, where we choose the surgery parameters suitably so that both the existence theorem and the canonical neighborhood theorem from \cite{Haslhofer_fb_surgery} apply. Moreover, we can arrange that at scale $H_{\textrm{neck}}^{-1}$ the ambient space looks as close as we want to Euclidean space. Furthermore, fixing $r_s\ll r_m\ll H_{\textrm{neck}}^{-1}$ we can ensure that the gluing map $\mathcal{G}^M_{r_s}$ from Proposition \ref{prop_gluing_map} (gluing map) is well defined.

By \cite[Theorem 1.1]{Haslhofer_fb_surgery} the free-boundary flow with surgery either becomes extinct at some $T<\infty$ or for $t\to \infty$ converges smoothly in the one or two-sheeted sense to a finite collection of embedded stable connected minimal surfaces with free or empty boundary. Since we started with a disk, by the nature of the surgery process \cite[Definition 2.4]{Haslhofer_fb_surgery}, these must be either embedded free-boundary minimal disks in $\mathrm{Int}_M(K)$ or embedded minimal two-spheres in $\mathrm{Int}_M(K)\setminus \partial M$. Suppose from now on the free-boundary flow with surgery becomes extinct at some $T<\infty$. Consider the times $0 < t_1 < \ldots < t_\ell\leq T$ when there is some surgery and/or discarding. 

\begin{claim}[discarded components]\label{claim_inductionhyp}
For all discarded components $C_i^j$ at time $t_i$ there exists a perfect isotopy to a half marble tree.
\end{claim}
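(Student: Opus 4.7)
The plan is to classify each discarded component via the canonical neighborhood theorem of \cite{Haslhofer_fb_surgery} and then deform each type to a half marble tree. Choosing $H_{\textrm{thick}}$ large enough relative to $H_{\textrm{neck}}$, every point of $C_i^j$ has mean curvature above the canonical neighborhood threshold and hence sits inside an $\varepsilon$-neck, a half $\varepsilon$-neck, a (standard or half) cap, or a strictly convex region of bounded geometry. Piecing these local models together along the component as in the combinatorial arguments of \cite[Section 7]{BHH} and the argument used in \cite{HaslhoferKetover}, and additionally tracking how half-necks and half-caps anchor to $\partial M$, I expect each $C_i^j$ to fall into one of four global types: (i) a strictly convex domain contained in $\textrm{Int}(M)$, (ii) a strictly convex free-boundary domain touching $\partial M$, (iii) a capped $\varepsilon$-tube, or (iv) a capped half $\varepsilon$-tube.

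For types (iii) and (iv), I will apply Proposition \ref{prop_iso_tubes} directly to obtain the desired perfect isotopy to a half marble tree. For types (i) and (ii), the component is a smooth closed ball or free-boundary half-ball with uniformly bounded geometry and uniform strict mean convexity. A direct normal-flow isotopy, e.g.\ running the (free-boundary) mean curvature flow for a short time and then rescaling, monotonically shrinks it to a round ball or round half-ball of radius $r_m$ while preserving strict mean convexity and the free-boundary condition. Such a round ball or half-ball is by definition a half marble tree with a single marble (respectively half marble) and empty string curve.

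The step I expect to be most delicate is the combinatorial classification, in particular ruling out hybrid configurations where an interior $\varepsilon$-neck approaches $\partial M$ or where a half $\varepsilon$-neck propagates into an interior standard cap. Here the strict convexity of $\partial M$ forces any $\varepsilon$-neck sufficiently close to $\partial M$ to in fact be a half $\varepsilon$-neck whose axis meets $\partial M$ essentially orthogonally, as already built into the boundary-adapted canonical neighborhood theorem of \cite{Haslhofer_fb_surgery}. Together with the neck-continuation argument of \cite{BHH}, adapted so that the continuation terminates either at a standard cap in the interior or at a standard half cap on $\partial M$, this should reduce the classification to its closed analogue and thereby complete the proof.
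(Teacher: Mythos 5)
Your overall strategy is the same as the paper's: classify each discarded component via the canonical neighborhood theorem of \cite{Haslhofer_fb_surgery} into convex pieces (interior or free-boundary), which you shrink to a small round ball or half ball, and capped tubes or capped half tubes, which you feed into Proposition \ref{prop_iso_tubes}. Two small bookkeeping points: you should record explicitly, as the paper does, that the topological assumption on the initial domain $K$ together with the nature of the surgery process forces every discarded component to be homeomorphic to a ball --- this is what makes the convex/tube classification exhaustive --- and note that in the paper's convention the parameter ordering is $H_{\textrm{trigger}}\gg H_{\textrm{neck}}\gg H_{\textrm{thick}}$, so the relevant hypothesis is that discarded components are entirely covered by canonical neighborhoods, not that $H_{\textrm{thick}}$ is large relative to $H_{\textrm{neck}}$.

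The genuine problem is your claim that the ``hybrid'' configurations can be ruled out. Strict convexity of $\partial M$ does not force a neck near the boundary to be a half $\eps$-neck: besides half necks (whose axis runs \emph{along} $\partial M$), the free-boundary canonical neighborhoods also include necks meeting $\partial M$ orthogonally, modeled on a round cylinder in $\mathbb{R}^3_+$ with axis perpendicular to $\partial\mathbb{R}^3_+$, which is a perfectly good reflection-symmetric ancient solution. Consequently a discarded component can be a tube that carries a standard cap at one end and terminates with free boundary on $\partial M$ at the other end; the paper explicitly flags this scenario and handles it ``almost exactly the same way as a capped $\eps$-tube,'' rather than excluding it. (Relatedly, your phrase ``half $\eps$-neck whose axis meets $\partial M$ essentially orthogonally'' is internally inconsistent with the definitions used here.) So the fix is not to sharpen the exclusion argument but to add this mixed case to your list (i)--(iv) and run the same isotopy construction as for a capped $\eps$-tube, treating the free-boundary end as the terminal cap; with that amendment your proof coincides with the paper's.
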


\begin{proof}[{Proof of Claim \ref{claim_inductionhyp}}]
Our topological assumption on the initial domain $K$ together with the nature of the surgery process implies that all discarded components are homeomorphic to balls. Thus, by the canonical neighborhood theorem from \cite{Haslhofer_fb_surgery}, each discarded component is either (a) convex with controlled geometry or (b) a capped $\eps$-tube or capped half $\eps$-tube (to be precise, there is also the potential scenario of a tube that is capped at one end and has free-boundary on the other end, but this is dealt with almost exactly the same way as a capped $\eps$-tube). Contracting to a small ball or half ball, respectively, in case (a), and using Proposition \ref{prop_iso_tubes} (isotopy for capped tubes and half tubes) in case (b), we can find a perfect isotopy to a half marble tree.
\end{proof}

Continuing the proof of the theorem, let $\A_i$ be the assertion that for each connected component of the presurgery domain $K^i:=K_{t_{i}}^-$ there exist a perfect isotopy to a half marble tree. Since $K_{t_{\ell}}^+=\emptyset$, we see that at the final time $t_{\ell}$ there is no replacement of necks or half necks by caps or half caps. Thus, all connected components of $K^\ell=K_{t_\ell}^-$ get discarded, and Claim \ref{claim_inductionhyp} (discarded components) shows that $\A_\ell$ holds.

\begin{claim}[induction step]\label{claim_inductionstep}
If $0<i<\ell$ and $\A_{i+1}$ holds, so does $\A_i$.
\end{claim}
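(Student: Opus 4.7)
The plan is to fix a connected component $C$ of the presurgery domain $K^i = K^-_{t_i}$ and to build a perfect isotopy from $C$ to a half marble tree by chaining together three stages. First, pass through the surgery at $t_i$ by applying Corollary \ref{cor_iso_trough_surg}: let $\{C^\sharp_j\}$ denote the components of $K^\sharp_{t_i}$ arising from $C$, and let $\gamma$ be the disjoint union of almost straight interior or boundary segments joining the opposing standard caps or half caps that were inserted at the necks and half necks of $C$. The corollary then produces a perfect isotopy from $C$ to $\mathcal{G}^M_{r_s}(\bigcup_j C^\sharp_{j,\bar t}, \gamma_{\bar t})$, where $C^\sharp_{j,\bar t}$ is the free-boundary MCF evolution of $C^\sharp_j$ for a short time $\bar t$, $\gamma_{\bar t}$ follows by normal motion, and $r_s>0$ is sufficiently small.

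Second, I would extend the isotopy by evolving the pieces in parallel until time $t_{i+1}^-$. The components $C^\sharp_j$ split into those lying in $K^+_{t_i}$, which continue by free-boundary MCF and become certain components of $K^{i+1}$, and those discarded at $t_i$. On each surviving component I would run the free-boundary MCF forward all the way to $t_{i+1}^-$, while on each discarded component I would perform an independent slow inward shrinking so as to preserve strict monotonicity. Throughout, the strings $\gamma_t$ follow their endpoints by normal motion, with small perturbations near the endpoints on $\partial M$ ensuring the orthogonality condition of Definition \ref{conrolled_conf}, and the string radius $r_s(t)$ is decreased gradually. The smoothness of $\mathcal{G}^M$ from Proposition \ref{prop_gluing_map} then turns this family into a perfect isotopy of $\mathcal{G}^M_{r_s(t)}(\cdot,\cdot)$.

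Third, I would apply $\mathcal{A}_{i+1}$ and Claim \ref{claim_inductionhyp} simultaneously, component by component. The induction hypothesis gives a perfect isotopy of each surviving component, which is now a component of $K^{i+1}$, to a half marble tree, and Claim \ref{claim_inductionhyp} does the same for each discarded component. Running all of these on a common rescaled time interval while letting $r_s(t)$ continue to decrease to fit inside the shrinking marbles and letting $\gamma_t$ move with its endpoints, the gluing map yields a perfect isotopy whose terminal domain is a gluing of half marble trees along small strings. By Definition \ref{def_marbletree} this gluing is itself a half marble tree, which completes the induction step $\mathcal{A}_i$.

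I expect the main obstacle to be bookkeeping rather than novel analysis: one has to verify throughout the second and third steps that every intermediate $(L_t,\gamma_t)$ lies in the class $\mathcal{X}_{\mathbb{A},b}$ on which $\mathcal{G}^M$ is defined, in particular that strings stay well away from $\partial L_t$ and from one another and that the boundary orthogonality required by Definition \ref{conrolled_conf} is preserved when the individual component isotopies are combined. This should be parallel to the closed case treated in \cite[Section 6]{HaslhoferKetover}, and the only genuinely new issue is the presence of boundary strings $\xi \subset \partial M$, which is handled by the half-space version of the gluing furnished by Proposition \ref{prop_gluing_map} together with the observation that endpoints of boundary strings remain on $\partial M$ under the normal-motion evolution.
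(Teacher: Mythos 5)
Your proposal follows essentially the same route as the paper: pass through the surgery at $t_i$ via Corollary \ref{cor_iso_trough_surg}, continue the surviving pieces by smooth free-boundary mean curvature flow to $t_{i+1}$, invoke $\A_{i+1}$ for the components of $K^{i+1}$ and Claim \ref{claim_inductionhyp} for the discarded ones, and combine everything with the gluing map $\mathcal{G}^M_{r_s(t)}$ applied to strings that follow the evolving boundaries by normal motion with slowly decreasing string radius. The one point you dismiss as bookkeeping that the paper treats as a genuine step is the behaviour of the strings during the later isotopies: it is \emph{not} true that the normally-moving strings automatically ``stay well away from $\partial L_t$'' and remain an admissible configuration in $\mathcal{X}_{\mathbb{A},b}$ --- at finitely many times they can run into the balls $B_{r}(p)$ or $B^+_{r}(q)$, $r=10\Gamma H_{\textrm{neck}}^{-1}$, around (half) surgery centers occurring inside the isotopies coming from $\A_{i+1}$ and Claim \ref{claim_inductionhyp}, and when this happens the string must be modified by sliding it along the corresponding neck or half neck as in \cite[Lemma 9.4]{BHH}; without this modification the domains $\mathcal{G}^M_{r_s(t)}(L_t,\gamma_t)$ are simply not defined (or not a perfect isotopy) at those times. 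Your parallel-evolution arrangement with an auxiliary ``slow inward shrinking'' of the discarded components is an inessential variation: since the gluing map acts on the disjoint union of components, one can simply reparametrize each component's isotopy independently, as the paper does.
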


\begin{proof}[{Proof of Claim \ref{claim_inductionstep}}]
Smooth evolution by mean curvature flow with free-boundary provides a perfect isotopy between $K_{t_i}^+$ and $K^{i+1}$. To proceed, recall that $K_{t_{i}}^+\subseteq K_{t_{i}}^\sharp\subseteq K_{t_{i}}^-=K^i$ is obtained by performing surgery on a minimal collection of disjoint $\delta$-necks and half $\delta$-necks separating the thick part and the trigger part and/or discarding connected components that are entirely covered by canonical neighborhoods.

By induction hypothesis each connected component of $K^{i+1}$ is perfectly isotopic to a half marble tree, and by Claim \ref{claim_inductionhyp} (isotopy for discarded components) each discarded component  is perfectly isotopic to a half marble tree as well. It follows that there exists a perfect isotopy $\{L_t\}_{t\in [0,1]}$ deforming $L_0=K_{t_{i}}^\sharp$ into a union of half marble trees $L_1$. If $L_0$ has more than one connected component, then we glue together these perfect isotopies using Proposition \ref{prop_gluing_map} (gluing map) as follows.

For each surgery neck and half neck at time $t_i$, select an almost straight line $\gamma_i^j$ between the tips of the corresponding pair of standard caps and half caps in $K_{t_{i}}^\sharp$. Then, setting $\gamma:=\bigcup_j \gamma_i^j$, by Corollary \ref{cor_iso_trough_surg} (isotopy trough surgeries) the domain $K^i=K_{t_i}^-$ is perfectly isotopic to $\mathcal{G}^M_{r_s}(L_{\bar{t}}^\sharp,\gamma_{\bar{t}})$, provided $\bar{t}$ is small enough. Finally, define $\{\gamma_t\}_{t\in [\bar{t},1]}$ by following the points where $\gamma_t$ touches $\partial L_t$ via normal motion, with the following modification. It can happen at finitely many times $t$ that $\gamma_{t}$ hits $\partial B_r(p)$ or $\partial B^+_r(q)$, for some surgery center $p$ or half surgery center $q$, where $r=10\Gamma H_{\textrm{neck}}^{-1}$. Whenever this happens, we modify $\gamma_t$ according to \cite[Lemma 9.4]{BHH} by sliding along the neck or half neck. Choose a suitable slowly decreasing positive function  $r_s(t)$ starting at $r_s(0)=r_s$ from above. Then $\mathcal{G}^M_{r_s(t)}(L_t,\gamma_t)_{t\in [\bar{t},1]}$ gives the last bit of the desired perfect isotopy. This finishes the proof of Claim \ref{claim_inductionstep}.
\end{proof}

To conclude the proof of the theorem, using Claim \ref{claim_inductionstep} (induction step) it follows from backwards induction on $i$ that $\A_1$ holds. Moreover, smooth mean curvature flow with free-boundary provides a perfect isotopy between $K$ and $K^1$. In particular, $K^1$ has only one connected component, and $\partial K^1\cap \partial M\neq\emptyset$. Thus, there exists a perfect isotopy deforming $K$ into a half marble tree, which has at least one half marble. Finally, arguing as in \cite[Section 5]{BHH} we can shrink the half marble tree in a perfect way to a boundary point to obtain the final bit of the desired foliation. This finishes the proof of Theorem \ref{thm_fol}.
\end{proof}

\begin{remark}[mean-convex 3-balls]
It seems likely that the conclusion of Theorem \ref{thm_fol} (free-boundary foliation dichotomy) still holds under the weaker assumption that $\partial M$ is strictly mean-convex. In fact, the only place where strict convexity was used is the proof of \cite[Claim 3.2]{Haslhofer_fb_surgery}.
\end{remark}

\bigskip

\bibliography{HaslhoferKetover_fb}

\bibliographystyle{abbrv}

\bigskip

\end{document}